\documentclass[12pt,a4paper]{article}
\usepackage{amsmath,amssymb,amsfonts}
\usepackage{amsthm}
\usepackage{algorithm,algorithmic}
\usepackage[figuresright]{rotating}
\usepackage{indentfirst}
\usepackage{graphicx}
\usepackage{epstopdf}
\usepackage[colorlinks,linkcolor=blue,anchorcolor=green,citecolor=red]{hyperref}
\usepackage{bookmark}
\usepackage{multirow,booktabs,array}

\setlength{\textwidth}{6in}
\setlength{\textheight}{8.5in}
\setlength{\oddsidemargin}{0.25in}
\setlength{\evensidemargin}{.25in}
\setlength{\topmargin}{0pt}

\newtheorem{theorem}{Theorem}[section]
\newtheorem{lemma}{Lemma}[section]

\title{A modified adaptive cubic regularization method for large-scale unconstrained optimization problem\thanks{This work was supported by the National Science Foundation of China under Grant No. 11571004.}}
\author{Yutao Zheng$^{a,b}$\thanks{Corresponding author. E-mail address: zhengyutao@htu.edu.cn.} and Bing Zheng$^a$\\ {\small $^a$ School of Mathematics and Statistics, Lanzhou University}\\ {\small Lanzhou 730000, P.R. China}\\
{\small $^b$ College of Mathematics and Information Science, Henan Normal University}\\{\small Xinxiang 453007, China}}
\date{}
\begin{document}
\maketitle
\begin{quote}
{\bf Abstract}
 {\small In this paper, we modify the adaptive cubic regularization method for large-scale  unconstrained optimization problem by using a real positive definite scalar matrix to approximate the exact Hessian. Combining with the nonmonotone technique, we also give a variant of the modified algorithm. Under some reasonable conditions, we analyze the global convergence of the proposed methods. Numerical experiments are performed and the obtained results show satisfactory performance when compared to the standard trust region method, adaptive regularization algorithm with cubics and the simple model trust-region method.}

\noindent{\bf Keywords:} Unconstrained optimization; Barzilai-Borwein step length; Adaptive cubic regularization method; Nonmonotone line search

\noindent{\bf AMS Subject Classifications(2010):} 65K05; 90C30; 49M37
\end{quote}
\section{Introduction}
Consider the following unconstrained optimization problem
\begin{equation}\label{pro}
\min f(x), x\in \mathbb{R}^n
\end{equation}
where $f:\mathbb{R}^n\rightarrow \mathbb{R}$ is a continuously differentiable function and its gradient $g(x)$ is available. In addition to the trust-region (TR) \cite{Conn1987} and line-search \cite{Nocedal} methods, Cartis et al. \cite{Cartis2011} suggested a third alternative for solving \eqref{pro} --- the use of a cubic overestimator of the objective function as a regularization technique.

Let $x_k$ be the current iterate point, $g_k=g(x_k)$ and $B_k$ denotes the exact Hessian of $f$ at $x_k$ or its symmetric approximation. According to the adaptive regularization algorithm using cubics (ARC) proposed in \cite{Cartis2011}, at each iteration $k$, the cubic model is
\begin{equation}\label{subprob}
m_k(s)=f(x_k)+s^Tg_k+\frac12s^TB_ks+\frac13\sigma_k\|s\|^3,
\end{equation}
where $\sigma_k>0$ is an adaptive parameter which can be viewed as the reciprocal of the trust-region radius. The trial step $s_k$ is computed as an approximate global minimizer of $m_k(s)$. The next iterate $x_k+s_k$ is accepted if the value of the metric function
\begin{equation*}
\rho_k=\frac{f(x_k)-f(x_k+s_k)}{f(x_k)-m_k(s_k)}
\end{equation*}
is greater than some positive constant of $\eta_1$. The adaptive parameter $\sigma_k$ is updated by a similar mechanism in the trust region method.

Compared to the standard trust region approach, the ARC method has a better numerical performance for small- and medium-scale test problems, see \cite{Cartis2011} for more details. Another good point for ARC method is its worst-case complexity when the exact second order information is provided, which means that we can find an $\epsilon$-approximate first-order critical point for an unconstrained problem with Lipschitz continuous Hessians in at most $O(\epsilon^{-2/3})$ evaluations of the objective function (and its derivatives).

Since then, more and more researchers take their attention to this new topic. Bianconcini et al. \cite{Bianconcini2015} supplied a viable alternative to the implementation of ARC method using the GLRT routine of GALAHAD library \cite{Gould2003a}. Bergou et al. \cite{Bergou2017} considered the energy norm of $\|s\|_{M}=\sqrt{s^TMs}$ instead of the general Euclidean norm $\|s\|$, where $M$ is a symmetric positive definite matrix. Bianconcini and Sciandrone \cite{Bianconcini2016a} introduced the line search and nonmonotone techniques to the ARC algorithm. Some worst-case evaluation complexity results can be found in \cite{Birgin2017a,Cartis2017a,Cartis2011a}. We refer the readers to \cite{Benson2018,Dussault2017,Gould2011,Martinez2017b,Martinez2017} and references therein for more related work.

However, from the view of numerical simulation, the TR or ARC method will be not suitable for large-scale problems, it may be very expensive to evaluate the exact Hessians. Moreover, choosing the true or a quasi-Newton approximate Hessian as $B_k$ will make finding the minimizer of the cubic model \eqref{subprob} more difficult and more complex, see more details in \cite{Cartis2011}. Such of these drawbacks can restrict the applications of the TR and ARC methods in practice.

Recently, Zhou et al. \cite{Zhou2015} introduced a nonmonotone adaptive trust region method with line search based on diagonal updating the Hessians, then Zhou et al. \cite{Zhou2016} embedded the Barzilai-Borwein step length \cite{Barzilai1988} in the framework of simple model trust-region method. The resulted algorithm simplifies the computation of solving the trust-region subproblems and shows better numerical performance when compared with the global Barzilai-Borwein method \cite{Raydan1997}.

In this paper, we will take $B_k$ as a real positive definite scalar matrix $\gamma_kI$, where $\gamma_k>0$ is the Barzilai-Borwein step length or some of its variants, which will inherit some certain quasi-Newton property. The minimizer of the resulted cubic model can be easily determined, and at the same time, the convergence of the algorithm is also   maintained.

The rest of this paper is organized as follows. In Sect. \ref{sec2}, we propose a modified adaptive regularization algorithm by introducing the Barzilai-Borwein parameter. Under some certain conditions, the global and strong convergence of the modified method is studied in Sect. \ref{sec3}. With nonmonotone technique, we also present a variant of the MARC algorithm and analyze the corresponding convergence in Sect. \ref{sec4}. Numerical experiments are performed in Sect. \ref{sec5}. Finally, in Sect. \ref{sec6}, we give some conclusions to end this paper.

\section{The MARC Method}\label{sec2}
We first review the Barzilai-Borwein gradient method \cite{Barzilai1988} which is given by
\begin{equation*}
x_{k+1}=x_k-D_kg_k,
\end{equation*}
where $D_k=\frac1{\gamma_k}I$. Ask $D_k$ to have the certain quasi-Newton property, that is,
\begin{equation}\label{gamma}
\gamma_k=\arg\min\limits_{\gamma}\|s_{k-1}-\frac1{\gamma}y_{k-1}\|=\frac{s_{k-1}^Ty_{k-1}}{s_{k-1}^Ts_{k-1}},
\end{equation}
where $s_{k-1}=x_k-x_{k-1}$ and $y_k=g_k-g_{k-1}$.

Due to its very satisfactory performance compared to the steepest descent method and better convergence property, Barzilai-Borwein gradient method now has been developed into a competitive method for large scale problems and got broad attention of numerous experts. Moreover, the Barzilai-Borwein step length is also widely applied in other optimization algorithms, such as the trust region method \cite{Zhou2016} and the conjugate gradient method \cite{Dai2013}, and was used to solve constrained optimization problem \cite{Dai2005a,Huang2015a}, multiobjective optimization problem \cite{Morovati2016} and so on.

By using the real positive definite scalar matrix $\gamma_kI$ to approximate the Hessian, where $\gamma_k$ is defined by \eqref{gamma} as before, the subproblem \eqref{subprob} in the ARC algorithm can be written as
\begin{equation}\label{subpro}
\min m_k(s)=f(x_k)+g_k^Ts+\frac12\gamma_ks^Ts+\frac13\sigma_k\|s\|^3, s\in \mathbb{R}^n,
\end{equation}
which can be easily solved.

Since $\nabla m_k(s)=g_k+\gamma_ks+\sigma_k\|s\|s$, the solution $s_k$ of problem \eqref{subpro} is parallel to $-g_k$. Let $s_k=-\alpha_k g_k$, \eqref{subpro} is equivalent to
\begin{equation*}
\min\limits_{\alpha>0}\phi(\alpha)=f(x_k)-\|g_k\|^2\alpha+\frac12\gamma_k\|g_k\|^2\alpha^2+\frac13\sigma_k\|g_k\|^3\alpha^3.
\end{equation*}
Asking $\phi^\prime(\alpha)=\|g_k\|^2(\sigma_k\|g_k\|\cdot\alpha^2+\gamma_k\cdot\alpha-1)=0$ yields a positive solution of
\begin{equation*}
\alpha_k=\frac{-\gamma_k+\sqrt{\gamma_k^2+4\sigma_k\|g_k\|}}{2\sigma_k\|g_k\|}=\frac2{\gamma_k+\sqrt{\gamma_k^2+4\sigma_k\|g_k\|}}.
\end{equation*}
Thus the exact solution of \eqref{subpro} is
\begin{equation}\label{sk}
s_k=-\frac2{\gamma_k+\sqrt{\gamma_k^2+4\sigma_k\|g_k\|}}g_k,
\end{equation}
which implies that
\begin{equation}\label{ssk}
\|s_k\|\leq \sqrt{\frac{\|g_k\|}{\sigma_k}}.
\end{equation}

Since the Barzilai-Borwein steplength $\gamma_k$ would be very large or negative, the truncation technique will be used, we restrict $\gamma_k$ to the interval $[\gamma_{\min},\gamma_{\max}]$. Now, recall the framework of the adaptive cubic regularization algorithm and the trust region method, we state our algorithm as below:
\begin{algorithm}[H]
    \caption{Modified Adaptive Regularization algorithm using Cubics (MARC)}
    \label{marc}
    \begin{algorithmic}[1]
        \REQUIRE $x_0$, $\eta_2\geq\eta_1>0$, $c_1\geq1\geq c_2>0$, $\gamma_{\max}>\gamma_0>\gamma_{\min}>0$ and $\sigma_0>0$.\\ For $k=0,1,\cdots$ until convergence,
        \STATE Compute $s_k$ by \eqref{sk}.
        \STATE Compute $f(x_k+s_k)$ and
        \begin{equation}\label{rhok}
        \rho_k=\frac{f(x_k)-f(x_k+s_k)}{f(x_k)-m_k(s_k)};
        \end{equation}
        \STATE Set
        \begin{align*}
        x_{k+1}=\left\{\begin{array}{ll}x_k+s_k &\text{if}~\rho_k\geq \eta_1,\\x_k&\text{otherwise}.\end{array}\right.
        \end{align*}
        \STATE Set
        \begin{equation}\label{update}
        \sigma_{k+1}=\left\{
        \begin{array}{lll}
        c_2\sigma_k, & \text{if}~\rho_k>\eta_2, & \text{[very successful iteration]},\\
        \sigma_k, & \text{if}~\eta_1\leq\rho_k\leq\eta_2, & \text{[successful iteration]},\\
        {c_1\sigma_k}, & \text{otherwise}, & \text{[unsuccessful iteration]}.
        \end{array}\right.
        \end{equation}
        \STATE Compute $\gamma_{k+1}$ by \eqref{gamma}, \eqref{bb1} or \eqref{bb2}. Set $\gamma_{k+1}=\max\{\gamma_{\min},\min\{\gamma_{k+1},\gamma_{\max}\}\}$.
    \end{algorithmic}
\end{algorithm}


Note that the scalar $\gamma_k$ plays an important role in Algorithm \ref{marc}, hence, another two formulas developed by Yabe et al. \cite{Yabe2007} and Zheng et al. \cite{Zheng2017} to compute $\gamma_k$ are considered, they are defined as

\begin{equation}\label{bb1}
\gamma_{k+1}=\frac{s_k^Ty_k+\theta_k[2(f_k-f_{k+1})]+(g_k+g_{k+1})^Ts_k}{s_k^Ts_k},
\end{equation}
where $\theta_k\in[0,3]$, and
\begin{equation}\label{bb2}
\gamma_{k+1}=\frac{r_k^Tw_k}{r_k^Tr_k},
\end{equation}
where $r_k=s_k-\psi_ks_{k-1}$ and $w_k=y_k-\psi_ky_{k-1}$, $\psi_k\geq0$ is a scalar. The $\gamma_{k+1}$ in \eqref{bb2} reduces to \eqref{gamma} if $\psi_k=0$ for all $k$.

\section{Convergence Analysis}\label{sec3}
Our convergence analysis is similar to that of the ARC method in \cite{Cartis2011}. However, the real positive scalar matrix, a special approximation of Hessian can simplify the analyses of some conclusions in some sense. Throughout this and the next section, we denote the index set of all successful iterations of the MARC algorithm by
\begin{equation*}
\mathcal{S}\triangleq\{k\geq0:k~\text{is successful or very successful}\}.
\end{equation*}

We first give a lower bound on the decrease in $f$ predicted from the cubic model.

\begin{lemma}\label{lem1}
    Let $f\in C^1$. Suppose that the step $s_k$ is computed by \eqref{sk}. Then for all $k\geq0$, we have that
    \begin{equation}\label{pred}
    f(x_k)-m_k(s_k)\geq\frac{\|g_k\|}{12}\min\Bigg\{\frac{\|g_k\|}{\gamma_k},\frac12\sqrt{\frac{\|g_k\|}{\sigma_k}}\Bigg\}.
    \end{equation}
\end{lemma}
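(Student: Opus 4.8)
The plan is to collapse this multivariate inequality to a one-variable computation using the structure already established in the excerpt, namely that the minimizer of \eqref{subpro} is $s_k=-\alpha_k g_k$ with $\alpha_k = 2/(\gamma_k+\sqrt{\gamma_k^2+4\sigma_k\|g_k\|})$. Substituting $s_k=-\alpha_k g_k$ into the model gives
\[
f(x_k)-m_k(s_k)=\|g_k\|^2\alpha_k-\tfrac12\gamma_k\|g_k\|^2\alpha_k^2-\tfrac13\sigma_k\|g_k\|^3\alpha_k^3,
\]
so the left-hand side depends only on the scalars $\alpha_k,\gamma_k,\sigma_k$ and $\|g_k\|$, and the problem becomes one of bounding this cubic in $\alpha_k$ from below.

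First I would exploit the stationarity condition $\phi'(\alpha_k)=0$, i.e. $\sigma_k\|g_k\|\alpha_k^2=1-\gamma_k\alpha_k$, to eliminate the cubic term. Multiplying this identity by $\|g_k\|^2\alpha_k$ converts $\sigma_k\|g_k\|^3\alpha_k^3$ into $(1-\gamma_k\alpha_k)\|g_k\|^2\alpha_k$; collecting terms then collapses the whole expression to the clean form
\[
f(x_k)-m_k(s_k)=\frac{\|g_k\|^2\alpha_k}{6}\bigl(4-\gamma_k\alpha_k\bigr).
\]
Next I would observe that $\sqrt{\gamma_k^2+4\sigma_k\|g_k\|}\geq\gamma_k$ forces $\gamma_k\alpha_k\leq 1$, so the bracket is at least $3$, which yields the intermediate bound $f(x_k)-m_k(s_k)\geq\tfrac12\|g_k\|^2\alpha_k$.

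It then remains to lower-bound $\alpha_k$ in a way that produces the $\min$ structure of \eqref{pred}. The key estimate is the subadditivity of the square root, $\sqrt{\gamma_k^2+4\sigma_k\|g_k\|}\leq\gamma_k+2\sqrt{\sigma_k\|g_k\|}$, which gives $\alpha_k\geq 1/(\gamma_k+\sqrt{\sigma_k\|g_k\|})$; bounding the denominator by $2\max\{\gamma_k,\sqrt{\sigma_k\|g_k\|}\}$ then produces $\alpha_k\geq\tfrac12\min\{1/\gamma_k,\,1/\sqrt{\sigma_k\|g_k\|}\}$. Inserting this into the intermediate bound and rewriting $\|g_k\|^2/\sqrt{\sigma_k\|g_k\|}$ as $\|g_k\|\sqrt{\|g_k\|/\sigma_k}$ gives a lower bound of the form $\tfrac14\|g_k\|\min\{\|g_k\|/\gamma_k,\sqrt{\|g_k\|/\sigma_k}\}$, which dominates the claimed right-hand side since $\tfrac14\geq\tfrac1{12}$ and $\tfrac12\sqrt{\|g_k\|/\sigma_k}\leq\sqrt{\|g_k\|/\sigma_k}$.

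I expect the only genuinely delicate step to be the algebraic elimination of the cubic term via the optimality condition, i.e. getting the coefficient $4-\gamma_k\alpha_k$ right; everything afterward is monotone constant-chasing, and the loose factor $\tfrac1{12}$ in \eqref{pred} leaves ample room, so I would not bother optimizing the intermediate inequalities. An equivalent route, if one prefers to avoid the $\max$ trick, is to split into the two cases $\gamma_k^2\geq 4\sigma_k\|g_k\|$ and $\gamma_k^2<4\sigma_k\|g_k\|$, in each of which exactly one entry of the $\min$ is active and $\alpha_k$ admits a single clean lower bound; this reproduces the same conclusion.
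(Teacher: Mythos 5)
Your proof is correct and follows essentially the same route as the paper: both substitute $s_k=-\alpha_k g_k$ to reduce the predicted decrease to $\alpha_k\|g_k\|^2\bigl(1-\tfrac12\gamma_k\alpha_k-\tfrac13\sigma_k\|g_k\|\alpha_k^2\bigr)$, lower-bound the bracket by a constant using $\gamma_k\alpha_k\leq1$, lower-bound $\alpha_k$ via $\sqrt{\gamma_k^2+4\sigma_k\|g_k\|}\leq\gamma_k+2\sqrt{\sigma_k\|g_k\|}$, and finish with the same $a+b\leq2\max\{a,b\}$ trick to produce the $\min$. The only (minor) difference is that you use the stationarity identity $\sigma_k\|g_k\|\alpha_k^2=1-\gamma_k\alpha_k$ to eliminate the cubic term exactly, which yields the sharper bracket constant $\tfrac{4-\gamma_k\alpha_k}{6}\geq\tfrac12$ in place of the paper's $\tfrac16$ (obtained there by bounding the two terms separately), so your final bound $\tfrac{\|g_k\|}{4}\min\{\|g_k\|/\gamma_k,\sqrt{\|g_k\|/\sigma_k}\}$ comfortably dominates \eqref{pred}.
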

\begin{proof}
    By a direct computation, we have
    \begin{align*}
    f(x_k)-m_k(s_k)=&f(x_k)-m_k(-\alpha_kg_k)\nonumber\\
    =&\alpha_k\|g_k\|^2\Big\{1-\frac12\gamma_k\alpha_k-\frac13\sigma_k\|g_k\|\alpha_k^2\Big\}\nonumber\\
    \geq&\frac{\|g_k\|^2}{6(\gamma_k+2\sqrt{\sigma_k\|g_k\|})}\nonumber\\
    \geq&\frac{\|g_k\|}{12}\min\Bigg\{\frac{\|g_k\|}{\gamma_k},\frac12\sqrt{\frac{\|g_k\|}{\sigma_k}}\Bigg\},
    \end{align*}
    since $\frac1{\gamma_k+2\sqrt{\sigma\|g_k\|}}\leq\frac1{\sqrt{\gamma_k^2+4\sigma\|g_k\|}}\leq\alpha_k\leq\frac1{\gamma_k}$ and $\alpha_k\leq\frac1{\sqrt{\sigma_k\|g_k\|}}$.
\end{proof}
In the above proof, we remark that $\alpha_k$ is also less than $\frac2{\frac12\gamma_k+\sqrt{\frac14\gamma_k^2+\frac43\sigma_k\|g_k\|}}$, which is the positive root of $h(\alpha)=1-\frac12\gamma_k\alpha-\frac13\sigma_k\|g_k\|\alpha^2=0$. This fact gives that $h(\alpha_k)>0$. Furthermore, $h(\alpha_k)>\frac16$ since $\alpha_k\leq\frac1{\gamma_k}$ gives $\gamma_k\alpha_k\leq1$ and $\alpha_k\leq\frac1{\sqrt{\sigma_k\|g_k\|}}$ gives $\sigma_k\|g_k\|\alpha_k^2\leq1$.

An auxiliary lemma is given next.
\begin{lemma}\label{lem2}
    Let $f\in C^1$. Suppose that $\epsilon>0$ and $\mathcal{I}$ is an infinite index set such that
    \begin{equation}\label{ass1}
    \|g_k\|\geq\epsilon, k\in \mathcal{I},
    \end{equation}
    and
    \begin{equation}\label{ass11}
    \sqrt{\frac{\|g_k\|}{\sigma_k}}\rightarrow 0, k\rightarrow \infty, k\in \mathcal{I}.
    \end{equation}
    Furthermore, if for some $x_*\in\mathbb{R}^n$,
    \begin{equation}\label{ass2}
    x_k\rightarrow x_*, k\rightarrow \infty, k\in \mathcal{I},
    \end{equation}
    then each iteration $k\in \mathcal{I}$ that is sufficiently large is very successful, and
    \begin{equation*}
    \sigma_{k+1}\leq\sigma_k
    \end{equation*}
    holds for all $k\in \mathcal{I}$ sufficiently large.
\end{lemma}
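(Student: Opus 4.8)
The plan is to prove that $\rho_k$ tends to a constant strictly exceeding $\eta_2$ along $\mathcal I$, so that for all large $k\in\mathcal I$ the very-successful test $\rho_k>\eta_2$ in \eqref{update} is met and $\sigma_{k+1}=c_2\sigma_k\le\sigma_k$ follows from $c_2\le1$. Writing $s_k=-\alpha_k g_k$ with $\alpha_k$ given in \eqref{sk}, I would begin from
\[
\rho_k-1=\frac{m_k(s_k)-f(x_k+s_k)}{f(x_k)-m_k(s_k)}.
\]
Because $f\in C^1$, the only non-polynomial quantity involved is the first-order remainder $R_k:=f(x_k+s_k)-f(x_k)-g_k^Ts_k$, which satisfies $|R_k|\le\varepsilon_k\|s_k\|$ with $\varepsilon_k:=\sup_{t\in[0,1]}\|g(x_k+ts_k)-g_k\|$. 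Expanding the model, using $\|s_k\|=\alpha_k\|g_k\|$, and dividing numerator and denominator by $\alpha_k\|g_k\|^2>0$ turns this into
\[
\rho_k-1=\frac{\frac12\gamma_k\alpha_k+\frac13\sigma_k\|g_k\|\alpha_k^2-R_k/(\alpha_k\|g_k\|^2)}{1-\frac12\gamma_k\alpha_k-\frac13\sigma_k\|g_k\|\alpha_k^2},
\]
whose denominator is bounded below by $\frac16$ by the remark following Lemma \ref{lem1}, so the ratio is well defined.

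The core of the proof is to pin down the limits of the scalar quantities $\gamma_k\alpha_k$, $\sigma_k\|g_k\|\alpha_k^2$ and $R_k/(\alpha_k\|g_k\|^2)$ along $\mathcal I$. Setting $\beta_k:=\sqrt{\|g_k\|/\sigma_k}$, the hypotheses \eqref{ass1} and \eqref{ass11} give $\beta_k\to0$ and $\sigma_k\|g_k\|=\|g_k\|^2/\beta_k^2\ge\epsilon^2/\beta_k^2\to\infty$. Since $\gamma_k\in[\gamma_{\min},\gamma_{\max}]$ is bounded, inserting $\alpha_k=2/(\gamma_k+\sqrt{\gamma_k^2+4\sigma_k\|g_k\|})$ and letting $\sigma_k\|g_k\|\to\infty$ gives, by a direct computation, $\gamma_k\alpha_k\to0$ and $\sigma_k\|g_k\|\alpha_k^2\to1$. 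For the remainder, \eqref{ssk} yields $\|s_k\|\le\beta_k\to0$; combined with $x_k\to x_*$ from \eqref{ass2} and the continuity of $g$ at $x_*$, every point $x_k+ts_k$ with $t\in[0,1]$ converges to $x_*$ uniformly in $t$, whence $\varepsilon_k\to0$, and then $|R_k|/(\alpha_k\|g_k\|^2)\le\varepsilon_k/\|g_k\|\le\varepsilon_k/\epsilon\to0$. Substituting these three limits yields $\rho_k-1\to(1/3)/(2/3)=1/2$, i.e.\ $\rho_k\to3/2$. Consequently $\rho_k>\eta_2$ for all sufficiently large $k\in\mathcal I$ (this uses $\eta_2<3/2$, which holds under the usual requirement $\eta_2<1$), each such iteration is very successful, and \eqref{update} gives $\sigma_{k+1}=c_2\sigma_k\le\sigma_k$.

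I expect the principal difficulty to be conceptual rather than computational. Since $f$ is merely $C^1$, the genuine quadratic and cubic terms of the model $m_k$ are not matched by any Taylor term of $f$; in particular the cubic contribution $\frac13\sigma_k\|s_k\|^3$ is of exactly the same order as the predicted decrease, so the classical trust-region strategy of bounding $|\rho_k-1|$ by a vanishing quantity is simply unavailable --- indeed $\rho_k\to3/2\ne1$. One is therefore forced to evaluate the exact limit of the ratio, which rests entirely on the sharp asymptotics $\sigma_k\|g_k\|\alpha_k^2\to1$ and $\gamma_k\alpha_k\to0$ rather than on crude upper bounds. The only genuinely analytic point, beyond these algebraic limits, is upgrading the pointwise continuity of $g$ to the uniform estimate $\varepsilon_k\to0$, and this is precisely where the convergence hypothesis $x_k\to x_*$ in \eqref{ass2} is used.
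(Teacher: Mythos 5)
Your proof is correct, and it takes a genuinely sharper route than the paper's. The paper never evaluates $\rho_k$ asymptotically: it sets $r_k:=f(x_k+s_k)-f(x_k)-\eta_2[m_k(s_k)-f(x_k)]$, splits it as $[f(x_k+s_k)-m_k(s_k)]+(1-\eta_2)[m_k(s_k)-f(x_k)]$, bounds the first bracket above via the mean value theorem and \eqref{ssk} by $\bigl(\|g(\xi_k)-g_k\|+\tfrac{\gamma_{\max}}{2}\sqrt{\|g_k\|/\sigma_k}\bigr)\sqrt{\|g_k\|/\sigma_k}$, and bounds the second using the consequence of Lemma \ref{lem1} that $f(x_k)-m_k(s_k)\geq\tfrac{\|g_k\|}{24}\sqrt{\|g_k\|/\sigma_k}$ once \eqref{ass11} takes effect; the hypotheses then force the bracket in \eqref{rk} to be negative, so $r_k<0$, which is equivalent to $\rho_k>\eta_2$. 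You use the same raw ingredients (the first-order remainder killed by continuity of $g$ at $x_*$ together with $x_k\to x_*$ and $\|s_k\|\to0$, the step bound \eqref{ssk}, boundedness of $\gamma_k$ from the truncation step), but you replace the paper's one-sided inequalities by the exact asymptotics $\gamma_k\alpha_k\to0$ and $\sigma_k\|g_k\|\alpha_k^2\to1$, which the paper never computes, and conclude $\rho_k\to 3/2$ along $\mathcal{I}$; all three limit claims check out, and the denominator bound $h(\alpha_k)>\tfrac16$ you borrow from the remark after Lemma \ref{lem1} is exactly what makes the ratio controllable. What your version buys: a strictly stronger conclusion (an exact limit rather than an eventual inequality), validity for any $\eta_2<3/2$, and the genuine structural insight that $\rho_k\not\to1$ here because the cubic term is of the same order as the predicted decrease — so the classical trust-region ``$|\rho_k-1|\to0$'' template is indeed unavailable. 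What the paper's version buys: brevity and direct recycling of Lemma \ref{lem1}, whose lower bound is reused again in Theorems \ref{theo1} and \ref{th2}. One point you handled more carefully than the source: Algorithm \ref{marc} only stipulates $\eta_2\geq\eta_1>0$, yet the paper's argument silently requires $\eta_2<1$ (otherwise the decisive term $-(1-\eta_2)\epsilon/24$ in \eqref{rk} is not negative), whereas your limit argument needs only $\eta_2<3/2$, and you flagged this dependence explicitly.
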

\begin{proof}
    We first estimate the difference between the function and the model at $x_k+s_k$. A Taylor expansion of $f(x_k+s_k)$ around $x_k$ gives
    \begin{equation*}
    f(x_k+s_k)-m_k(s_k)=\big(g(\xi_k)-g_k\big)^Ts_k-\frac12\gamma_k\|s_k\|^2-\frac13\sigma_k\|s_k\|^3
    \end{equation*}
    for some $\xi_k$ on the line segment $(x_k,x_k+s_k)$, which, together with \eqref{ssk}, further gives
    \begin{equation}\label{diff}
    f(x_k+s_k)-m_k(s_k)\leq\Bigg\{\|g(\xi_k)-g_k\|+\frac{\gamma_k}2\sqrt{\frac{\|g_k\|}{\sigma_k}}\Bigg\}\cdot\sqrt{\frac{\|g_k\|}{\sigma_k}}.
    \end{equation}
    On the other hand, it follows from \eqref{pred} and the boundedness of $\gamma_k$ that
    \begin{equation*}
    f(x_k)-m_k(s_k)\geq\frac{\|g_k\|}{12}\min\Bigg\{\frac{\|g_k\|}{\gamma_{\max}},\frac12\sqrt{\frac{\|g_k\|}{\sigma_k}}\Bigg\}
    \end{equation*}
    holds for all $k$. By employing the limit in \eqref{ass11}, for all $k\in\mathcal{I}$ sufficiently large, we have
    \begin{equation*}
    f(x_k)-m_k(s_k)\geq\frac{\|g_k\|}{24}\sqrt{\frac{\|g_k\|}{\sigma_k}}.
    \end{equation*}
    Thus, by \eqref{diff}, we obtain that
    \begin{align}\label{rk}
    r_k:=&f(x_k+s_k)-f(x_k)-\eta_2[m_k(s_k)-f(x_k)]\nonumber\\
    =&f(x_k+s_k)-m_k(s_k)+(1-\eta_2)[m_k(s_k)-f(x_k)]\nonumber\\
    \leq&\sqrt{\frac{\|g_k\|}{\sigma_k}}\Bigg\{\|g(\xi_k)-g_k\|+\frac{\gamma_{\max}}2\sqrt{\frac{\|g_k\|}{\sigma_k}}-\frac{(1-\eta_2)\epsilon}{24}\Bigg\},
    \end{align}
    for all $k\in\mathcal{I}$ sufficiently large.
    
    Since $\xi_k\in(x_k,x_k+s_k)$, we have $\|\xi_k-x_*\|\leq\|x_k-x_*\|+\|s_k\|$. Also, \eqref{ssk} and \eqref{ass11} imply that $\|s_k\|\rightarrow 0$ as $k\rightarrow \infty$, $k\in \mathcal{I}$. Therefore $\xi_k\rightarrow x_*$ as $k\rightarrow \infty$, $k\in \mathcal{I}$ due to \eqref{ass2}. Since $g$ is continuous, we conclude that $\|g(\xi_k)-g_k\|\rightarrow 0$ as $k\rightarrow \infty$, $k\in \mathcal{I}$. Thus, the limits in \eqref{ass11} and \eqref{rk} imply that $r_k<0$, for all $k\in \mathcal{I}$ sufficiently large. Recalling that
    \begin{equation*}
    r_k<0 \Leftrightarrow \rho_k=\frac{f(x_k)-f(x_k+s_k)}{f(x_k)-m_k(s_k)}>\eta_2,
    \end{equation*}
    which means that the $k$-th iteration is very successful, and hence $\sigma_{k+1}\leq\sigma_k$.
\end{proof}

The following lemma shows that if there are only finitely many successful iterations, then all later iterates are first-order critical points.

\begin{lemma}\label{lem3}
    Let $f\in C^1$. Suppose that there are only finitely many successful iterations. Then $x_k=x_*$ for all sufficiently large $k$ and $g(x_*)=0$.
\end{lemma}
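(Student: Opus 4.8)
The plan is to first pin down the iterate sequence and then derive a contradiction from Lemma \ref{lem2}. Since only finitely many iterations are successful by assumption, there is a largest successful index, say $k_0$ (take $k_0=-1$ if no iteration is successful, so that the argument below is unaffected). For every $k>k_0$ the iteration is unsuccessful, so the update rule for $x_{k+1}$ in Algorithm \ref{marc} gives $x_{k+1}=x_k$. Hence the iterates are eventually constant, and setting $x_*:=x_{k_0+1}$ we obtain $x_k=x_*$ for all $k\geq k_0+1$. This immediately settles the first claim.

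It remains to show $g(x_*)=0$. I would argue by contradiction: suppose $\epsilon:=\|g(x_*)\|>0$. Because $x_k=x_*$ for all large $k$, we then have $\|g_k\|=\epsilon$ for every $k\geq k_0+1$. The strategy is to verify that the three hypotheses of Lemma \ref{lem2} hold on the infinite index set $\mathcal{I}:=\{k:k>k_0\}$, and then to observe that the conclusion of that lemma directly contradicts the standing assumption that every such iteration is unsuccessful.

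Checking the hypotheses: condition \eqref{ass1} holds with equality since $\|g_k\|=\epsilon$ on $\mathcal{I}$, and condition \eqref{ass2} holds trivially because $x_k=x_*$ is constant on $\mathcal{I}$. For condition \eqref{ass11}, I would use that every iteration in $\mathcal{I}$ is unsuccessful, so by \eqref{update} the regularization parameter satisfies $\sigma_{k+1}=c_1\sigma_k$; iterating gives $\sigma_k\to\infty$, and since $\|g_k\|=\epsilon$ is bounded it follows that $\sqrt{\|g_k\|/\sigma_k}\to 0$. With all three hypotheses in place, Lemma \ref{lem2} asserts that every sufficiently large $k\in\mathcal{I}$ is very successful, contradicting the fact that no iteration beyond $k_0$ is successful. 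Hence $g(x_*)\neq 0$ is untenable and $g(x_*)=0$.

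The argument is essentially bookkeeping once Lemma \ref{lem2} is available, so I do not expect a serious analytic obstacle. The single point that warrants care is the divergence $\sigma_k\to\infty$: it hinges on the increase factor on unsuccessful steps being strictly larger than one, that is $c_1>1$ rather than merely $c_1\geq 1$. If $c_1=1$ were permitted, $\sigma_k$ could stagnate and \eqref{ass11} would fail, so this strict inequality is what makes the whole reduction to Lemma \ref{lem2} go through.
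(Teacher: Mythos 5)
Your proof is correct and is essentially the same argument as the paper's, which simply cites Lemma 2.4 of Cartis et al.\ \cite{Cartis2011}: fix $x_*$ after the last successful iteration, force $\sigma_k\to\infty$ via \eqref{update} on the tail of unsuccessful iterations, and invoke Lemma \ref{lem2} with $\mathcal{I}=\{k:k>k_0\}$ to contradict unsuccessfulness. Your closing caveat is a genuine catch rather than pedantry: Algorithm \ref{marc} as stated only requires $c_1\geq1$, and with $c_1=1$ the parameter $\sigma_k$ stagnates on unsuccessful steps, condition \eqref{ass11} fails, and the lemma's proof breaks down, so the paper implicitly needs the strict inequality $c_1>1$ (consistent with its experimental choice $c_1=5$ and with the corresponding strict assumption $\gamma_2>1$ in \cite{Cartis2011}).
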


\begin{proof}
    See the proof of Lemma 2.4 in \cite{Cartis2011}.
\end{proof}

Now, we are ready to show the global convergence of the MARC algorithm. We conclude that if $f$ is bounded from below, either $g_k=0$ for some finite $k$, or there is a subsequence of $\{g_k\}$ converging to zero.

\begin{theorem}\label{theo1}
    Let $f\in C^1$, $\{x_k\}$ is the sequence generated by Algorithm \ref{marc}. If $\{f(x_k)\}$ is bounded below, then
    \begin{equation}\label{them1}
    \liminf_{k\rightarrow\infty}\|g_k\|=0.
    \end{equation}
\end{theorem}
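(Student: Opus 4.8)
The plan is to argue by contradiction. Suppose \eqref{them1} fails; then there exist $\epsilon>0$ and $k_0$ such that $\|g_k\|\ge\epsilon$ for all $k\ge k_0$. I would first dispose of the case in which only finitely many iterations are successful: Lemma~\ref{lem3} then gives $x_k=x_*$ for all large $k$ with $g(x_*)=0$, i.e. $\|g_k\|=0$ eventually, contradicting $\|g_k\|\ge\epsilon$. Hence I may assume the index set $\mathcal{S}$ is infinite and work along it.

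Next I would exploit the monotonicity and boundedness of the objective. On an unsuccessful iteration $x_{k+1}=x_k$, while on a successful one $\rho_k\ge\eta_1$ together with Lemma~\ref{lem1} (and $\|g_k\|\ge\epsilon>0$) forces a strict decrease; thus $\{f(x_k)\}$ is nonincreasing and, being bounded below, converges. Telescoping over the successful iterations gives $\sum_{k\in\mathcal{S}}\big(f(x_k)-f(x_{k+1})\big)<\infty$. Using $\rho_k\ge\eta_1$ and \eqref{pred} with the bound $\gamma_k\le\gamma_{\max}$, each summand dominates
\begin{equation*}
f(x_k)-f(x_{k+1})\ge\eta_1\big(f(x_k)-m_k(s_k)\big)\ge\frac{\eta_1\|g_k\|}{12}\min\Bigg\{\frac{\|g_k\|}{\gamma_{\max}},\frac12\sqrt{\frac{\|g_k\|}{\sigma_k}}\Bigg\},
\end{equation*}
so the right-hand side tends to $0$ along $\mathcal{S}$. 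Since $\|g_k\|/\gamma_{\max}\ge\epsilon/\gamma_{\max}>0$ is bounded away from zero, the minimum must eventually be attained by the second term, which yields $\sqrt{\|g_k\|/\sigma_k}\to0$ and hence $\sigma_k\to\infty$ as $k\to\infty$, $k\in\mathcal{S}$. In particular the hypotheses \eqref{ass1} and \eqref{ass11} of Lemma~\ref{lem2} now hold with $\mathcal{I}=\mathcal{S}$.

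Finally I would close the argument with Lemma~\ref{lem2}. Passing to a subsequence of $\{x_k\}_{k\in\mathcal{S}}$ that converges to some $x_*$ supplies the remaining hypothesis \eqref{ass2}, so Lemma~\ref{lem2} applies and shows that every sufficiently large such iteration is very successful with $\sigma_{k+1}\le\sigma_k$; since $\sigma_k$ is only ever increased on unsuccessful iterations, this is incompatible with $\sigma_k\to\infty$ along the subsequence, giving the desired contradiction. I expect the delicate point to be precisely this last step: because $f$ is merely $C^1$ (no uniform or Lipschitz control on $g$), one cannot bound $\sigma_k$ by a single global threshold as in the classical ARC analysis, and Lemma~\ref{lem2} recovers the ``large $\sigma_k\Rightarrow$ very successful'' effect only \emph{locally}, along a convergent subsequence of iterates. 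Securing such a limit point---e.g. from boundedness of the level set $\{x:f(x)\le f(x_0)\}$---and verifying that this local mechanism indeed rules out $\sigma_k\to\infty$ is where the real work lies.
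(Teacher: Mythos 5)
Your outline follows the paper up through the point where $\sqrt{\|g_k\|/\sigma_k}\to0$ and $\sigma_k\to\infty$ along $\mathcal{S}$, but there are two genuine gaps after that. First, the convergent subsequence you invoke to supply hypothesis \eqref{ass2} of Lemma~\ref{lem2} need not exist: the theorem assumes only $f\in C^1$ with $\{f(x_k)\}$ bounded below, and there is no bounded-level-set assumption, so the iterates can escape to infinity with no limit point. You correctly flag this as ``where the real work lies,'' but proposing to assume level-set boundedness changes the theorem. The paper's resolution is different and is the key idea your proposal misses: telescoping \eqref{tem0p} gives not merely that the terms tend to zero but the \emph{summability} $\sum_{k\in\mathcal{S}}\sqrt{\|g_k\|/\sigma_k}<+\infty$ (this is \eqref{ggk}); combined with $\|s_k\|\leq\sqrt{\|g_k\|/\sigma_k}$ from \eqref{ssk} and the fact that unsuccessful iterations do not move the iterate, this shows $\{x_k\}$ is a Cauchy sequence, so the \emph{whole} sequence converges to some $x_*$ with no compactness needed. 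Your telescoping step stops short of extracting this summability, which is exactly what makes the argument work under the stated hypotheses.

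Second, even granting a limit $x_*$, your closing contradiction does not go through as stated. Applying Lemma~\ref{lem2} with $\mathcal{I}=\mathcal{S}$ (or along a subsequence) only yields $\sigma_{k+1}\leq\sigma_k$ at sufficiently large \emph{successful} $k$; since each unsuccessful iteration multiplies $\sigma$ by $c_1\geq1$ (in practice $c_1>1$, e.g.\ $c_1=5$ in the experiments), infinitely many interleaved unsuccessful iterations can still drive $\sigma_k\to\infty$ --- for instance two unsuccessful steps per very successful one give a net factor $c_1^2c_2>1$. So ``$\sigma_k$ is only ever increased on unsuccessful iterations'' is not incompatible with $\sigma_k\to\infty$ here; you must additionally rule out infinitely many unsuccessful iterations. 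The paper does this with a separate argument: if unsuccessful iterations occur infinitely often, pick successful indices $k_i$ whose predecessor $k_i-1$ is unsuccessful; then $x_{k_i-1}=x_{k_i}$, $g_{k_i-1}=g_{k_i}$ and $\sigma_{k_i-1}=\sigma_{k_i}/c_1$, so \eqref{ass1}--\eqref{ass2} hold with $\mathcal{I}=\{k_i-1:i\geq0\}$ and Lemma~\ref{lem2} forces $k_i-1$ to be very successful for all large $i$, contradicting its unsuccessfulness. Only then does ``all large $k$ lie in $\mathcal{S}$, hence $\sigma_k$ eventually nonincreasing'' contradict $\sigma_k\to\infty$. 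Without this step (and without the Cauchy argument above), your proof is incomplete.
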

\begin{proof}
    Lemma \ref{lem3} shows that \eqref{them1} is true when there are only finitely many successful iterations. Now we assume that infinitely many successful iterations occur, and recall the notation $\mathcal{S}$.
    
    We prove \eqref{them1} by contradiction. Assume that there is some $\epsilon>0$ such that
    \begin{equation}\label{gk}
    \|g_k\|\geq\epsilon
    \end{equation}
    holds for all $k\geq0$. We first prove that \eqref{gk} implies that
    \begin{equation}\label{ggk}
    \sum\limits_{k\in\mathcal{S}}\sqrt{\frac{\|g_k\|}{\sigma_k}}<+\infty,
    \end{equation}
    which further gives that $\sqrt{\|g_k\|/\sigma_k}\rightarrow 0$, and hence $\sigma_k\rightarrow \infty$ as $k\rightarrow\infty$.\\
    It follows from \eqref{pred}, \eqref{gk} and the construction of algorithm that
    \begin{align}\label{temp0}
    f(x_k)-f(x_{k+1})\geq&\eta_1[f(x_k)-m_k(s_k)]\nonumber\\
    \geq&\frac{\eta_1\epsilon}{12}\min\Bigg\{\frac{\epsilon}{\gamma_{\max}},\frac12\sqrt{\frac{\|g_k\|}{\sigma_k}}\Bigg\}, k\in\mathcal{S}.
    \end{align}
    Since the sequence $\{f(x_k)\}$ is monotonically decreasing and bounded below, it is convergent. As $k\rightarrow \infty$, the minimum on the right-hand side of \eqref{temp0} will be attained at $\sqrt{\|g_k\|}/(2\sqrt{\sigma_k})$ and the left-hand side of \eqref{temp0} converges to zero. Thus we obtain
    \begin{equation}\label{tem0p}
    f(x_k)-f(x_{k+1})\geq\frac{\eta_1\epsilon}{24}\sqrt{\frac{\|g_k\|}{\sigma_k}},
    \end{equation}
    for all $k\in\mathcal{S}$ sufficiently large.
    Summing up \eqref{tem0p} over all sufficiently large iterations provides that
    \begin{equation}\label{temp00}
    f(x_{k_0})-f(x_{j+1})=\sum\limits_{k=k_0,k\in\mathcal{S}}^j[f(x_k)-f(x_{k+1})]\geq\frac{\eta_1\epsilon}{24}\sum\limits_{k=k_0,k\in\mathcal{S}}^j\sqrt{\frac{\|g_k\|}{\sigma_k}}
    \end{equation}
    holds for some index $k_0$ sufficiently large and for any $j\in\mathcal{S}$, $j\geq k_0$. Since $\{f(x_{j+1})\}$ is convergent, letting $j\rightarrow\infty$ in \eqref{temp00} yields \eqref{ggk}.
    
    Now we show that the sequence of iterates $\{x_k\}$ is a Cauchy sequence. It follows from \eqref{ssk} and the construction of the algorithm that
    \begin{align*}
    \|x_{l+r}-x_l\|\leq&\sum\limits_{k=l}^{l+r-1}\|x_{k+1}-x_k\|=\sum\limits_{k=l,k\in\mathcal{S}}^{l+r-1}\|s_k\|\nonumber\\
    \leq&\sum\limits_{k=l,k\in\mathcal{S}}^{l+r-1}\sqrt{\frac{\|g_k\|}{\sigma_k}},
    \end{align*}
    for $l\geq0$ sufficiently large and any $r\geq0$, whose right-hand term tends to zero as $l\rightarrow\infty$ due to \eqref{ggk}. Thus $\{x_k\}$ is a Cauchy sequence, and there is some $x_*\in \mathbb{R}^n$ such that
    \begin{equation}\label{xk}
    x_k\rightarrow x_*, k\rightarrow\infty.
    \end{equation}
    Then all the conditions of Lemma \ref{lem3} hold with $\mathcal{I}:=\mathcal{S}$, and all $k\in\mathcal{S}$ sufficiently large are very successful. Now, if all $k$ sufficiently large belong to $\mathcal{S}$, then by \eqref{update}, $\sigma_{k+1}\leq\sigma_k$ for all $k$ sufficiently large, and so $\{\sigma_k\}$  is bounded above. This, however, contradicts $\sigma_k\rightarrow\infty$. Thus \eqref{gk} cannot hold.
    
    It remains to show that all sufficiently large iterations belong to $\mathcal{S}$. Conversely, let $\{k_i\}$ denote an infinite subsequence of very successful iterations such that $k_i-1$ is unsuccessful for all $i\geq0$. Then, $\sigma_{k_i}=c_1\sigma_{k_i-1}$ and $g_{k_i}=g_{k_i-1}$ for all $i$. Thus, we deduce that
    \begin{equation}\label{gk-1}
    \sqrt{\|g_{k_i-1}\|/\sigma_{k_i-1}}\rightarrow 0, i\rightarrow\infty.
    \end{equation}
    It follows from \eqref{gk}, \eqref{xk} and \eqref{gk-1} that \eqref{ass1}--\eqref{ass2} are satisfied with $\mathcal{I}:=\{k_i-1:i\geq0\}$, and so Lemma \ref{lem2} provides that $k_i-1$ is very successful for all $i$ sufficiently large. This contradicts our assumption that $k_i-1$ is unsuccessful for all $i$.
\end{proof}
Furthermore, if we assume that $g$ is uniformly continuous, the whole sequence of gradients $\{g_k\}$ converges to zero.
\begin{theorem}\label{th2}
    Let $f\in C^1$ and its gradient $g$ is uniformly continuous, $\{x_k\}$ is the sequence generated by Algorithm \ref{marc}. If $\{f(x_k)\}$ is bounded below, then
    \begin{equation}\label{theo2}
    \lim_{k\rightarrow\infty}\|g_k\|=0.
    \end{equation}
\end{theorem}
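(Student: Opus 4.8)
The plan is to promote the $\liminf$ statement of Theorem~\ref{theo1} to a genuine limit by a contradiction argument in which the uniform continuity of $g$ controls the change of the gradient along stretches of iterates where the gradient stays bounded away from zero. First I would suppose that \eqref{theo2} fails. Since Theorem~\ref{theo1} already gives $\liminf_{k\to\infty}\|g_k\|=0$, there must then exist a constant $\epsilon_0>0$ for which $\|g_k\|\ge 2\epsilon_0$ holds for infinitely many $k$ while $\|g_k\|<\epsilon_0$ also holds for infinitely many $k$. Exploiting this oscillation, for each index $p$ with $\|g_p\|\ge 2\epsilon_0$ I would let $q=q(p)$ be the first subsequent index with $\|g_q\|<\epsilon_0$; this produces two infinite sequences $\{p_i\}$, $\{q_i\}$ with $p_i<q_i$, $\|g_{p_i}\|\ge 2\epsilon_0$, $\|g_{q_i}\|<\epsilon_0$, and crucially $\|g_k\|\ge\epsilon_0$ for every $k$ with $p_i\le k<q_i$.

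Second, I would extract from the decrease mechanism a bound on the \emph{step lengths} rather than on $\sqrt{\|g_k\|/\sigma_k}$. Because $\{f(x_k)\}$ is monotonically nonincreasing (it is unchanged on unsuccessful iterations and decreased on successful ones) and bounded below, it converges, so $\sum_{k\in\mathcal S}[f(x_k)-f(x_{k+1})]<\infty$ and in particular $f(x_k)-f(x_{k+1})\to0$ along $\mathcal S$. For a successful $k$ with $\|g_k\|\ge\epsilon_0$, estimate \eqref{pred} gives
\[
f(x_k)-f(x_{k+1})\ \ge\ \eta_1\,[f(x_k)-m_k(s_k)]\ \ge\ \frac{\eta_1\epsilon_0}{12}\min\Bigl\{\tfrac{\epsilon_0}{\gamma_{\max}},\ \tfrac12\sqrt{\tfrac{\|g_k\|}{\sigma_k}}\Bigr\}.
\]
Since the left-hand side tends to $0$, the constant branch $\epsilon_0/\gamma_{\max}$ can be active only finitely often; hence there is an index $K$ such that for all successful $k\ge K$ with $\|g_k\|\ge\epsilon_0$ one has $f(x_k)-f(x_{k+1})\ge\frac{\eta_1\epsilon_0}{24}\sqrt{\|g_k\|/\sigma_k}\ge\frac{\eta_1\epsilon_0}{24}\|s_k\|$, where the last inequality is \eqref{ssk}.

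Third, I would telescope this over each block. For $i$ large enough that $p_i\ge K$, and recalling that $x$ only moves on successful iterations,
\[
\|x_{q_i}-x_{p_i}\|\ \le\!\!\sum_{\substack{p_i\le k<q_i\\ k\in\mathcal S}}\!\!\|s_k\|\ \le\ \frac{24}{\eta_1\epsilon_0}\sum_{\substack{p_i\le k<q_i\\ k\in\mathcal S}}\!\![f(x_k)-f(x_{k+1})]\ =\ \frac{24}{\eta_1\epsilon_0}\bigl[f(x_{p_i})-f(x_{q_i})\bigr].
\]
As $i\to\infty$ the right-hand side tends to $0$ because $\{f(x_k)\}$ is convergent, so $\|x_{q_i}-x_{p_i}\|\to0$. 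Uniform continuity of $g$ then forces $\|g_{p_i}-g_{q_i}\|\to0$, which contradicts $\|g_{p_i}-g_{q_i}\|\ge\|g_{p_i}\|-\|g_{q_i}\|>2\epsilon_0-\epsilon_0=\epsilon_0$. This contradiction establishes \eqref{theo2}.

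I expect the main obstacle to be the second step: the bound in \eqref{pred} naturally controls $\sqrt{\|g_k\|/\sigma_k}$, and one must argue via the case analysis of the minimum that, for all sufficiently large successful iterations with $\|g_k\|\ge\epsilon_0$, the per-step decrease dominates a fixed multiple of the actual step length $\|s_k\|$; this is what makes the telescoping yield a bound on $\|x_{q_i}-x_{p_i}\|$. It is also worth being careful that only successful iterations move the iterate and that the gradient is unchanged across unsuccessful ones, so the blocks $[p_i,q_i)$ can be summed over $\mathcal S$ alone. Finally, uniform continuity (as opposed to mere continuity) is essential here, since the points $x_{p_i},x_{q_i}$ need not lie in a common compact set.
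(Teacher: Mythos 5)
Your proposal is correct and takes essentially the same route as the paper's own proof of Theorem \ref{th2}: your oscillation blocks $[p_i,q_i)$ are the paper's $[t_i,l_i)$, and you pass in the same way from \eqref{pred} and \eqref{ssk} to the per-step bound $f(x_k)-f(x_{k+1})\ge\frac{\eta_1\epsilon}{24}\|s_k\|$ on sufficiently late successful iterations, telescope over each block, and derive the same contradiction from uniform continuity. The only cosmetic difference is that the paper first disposes of the finitely-many-successful-iterations case via Lemma \ref{lem3} and chooses $t_i\in\mathcal{S}$, whereas your summation over $\mathcal{S}$ within each block makes that restriction unnecessary.
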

\begin{proof}
    Lemma \ref{lem3} implies that \eqref{theo2} holds if there are finitely many successful iterations. Now assume that there is an infinite subsequence $\{t_i\}\subseteq\mathcal{S}$ such that $\|g_{t_i}\|\geq2\epsilon$, for some $\epsilon>0$ and for all $i$.
    By Theorem \ref{theo1}, for each $t_i$, there exists a first successful iteration $l_i>t_i$ such that $\|g_{l_i}\|<\epsilon$. Thus $\{l_i\}\subseteq\mathcal{S}$, and we have
    \begin{equation}\label{gk2}
    \|g_{t_i}\|\geq\epsilon,
    \end{equation}
    for all $i$ and for all $k$ with $t_i\leq k<l_i$.
    Let $\mathcal{K}\triangleq\{k\in\mathcal{S}:t_i\leq k<l_i\}$, where $\{t_i\}$ and $\{l_i\}$ were defined above. Since $\mathcal{K}\subseteq\mathcal{S}$, it follows from \eqref{pred}, \eqref{gk} and the construction of algorithm that
    \begin{equation}\label{temp}
    f(x_k)-f(x_{k+1})\geq\frac{\eta_1\epsilon}{12}\min\Bigg\{\frac{\epsilon}{\gamma_{\max}},\frac12\sqrt{\frac{\|g_k\|}{\sigma_k}}\Bigg\}, k\in\mathcal{K}.
    \end{equation}
    Since $\{f(x_k)\}$ is monotonically decreasing and bounded from below, it is convergent, i.e., $f(x_k)-f(x_{k+1})\rightarrow 0$ as $k\rightarrow\infty$, thus
    \begin{equation*}
    \sqrt{\|g_k\|/\sigma_k}\rightarrow 0, k\rightarrow\infty, k\in\mathcal{K},
    \end{equation*}
    which, together with \eqref{ssk} and \eqref{temp}, implies for all $t_i\leq k<l_i$, $k\in\mathcal{S}$, $i$ sufficiently large,
    \begin{equation*}
    f(x_k)-f(x_{k+1})\geq\frac{\eta_1\epsilon}{24}\|s_k\|.
    \end{equation*}
    Summing up the above inequation over $k$ with $t_i\leq k<l_i$, we have
    \begin{equation*}
    \|x_{t_i}-x_{l_i}\|\leq\sum\limits_{k=t_i}^{l_i-1}\|x_{k+1}-x_k\|=\sum\limits_{k=t_i}^{l_i-1}\|s_k\|\leq\frac{24}{\eta_1\epsilon}[f(x_{t_i})-f(x_{l_i})],
    \end{equation*}
    for all $i$ sufficiently large. Since $\{f(x_k)\}$ is convergent, $\{f(x_{t_i})-f(x_{l_i})\}$ converges to zero as $i\rightarrow\infty$. Thus $\|x_{t_i}-x_{l_i}\|\rightarrow 0$ as $i\rightarrow\infty$, and hence $\|g_{t_i}-g_{l_i}\|\rightarrow 0$ since $g$ is uniformly continuous. Then we have reached a contradiction, since $\|g_{t_i}-g_{l_i}\|\geq\|g_{t_i}\|-\|g_{l_i}\|\geq\epsilon$ for all $i>0$.
\end{proof}

\textbf{Remark:} We could set $\gamma_{\min}=0$ in Algorithm 1, which means that we will ignore the second-order term when $\gamma_k<0$. In this case, the model
\[m_k(s)=f(x_k)+g_k^Ts+\frac{\sigma_k}3\|s\|^3\]
is just the linear model plus a term of cubic regularization, the minimizer of $m_k(s)$ is $s=-\frac{g_k}{\sqrt{\sigma_k\|g_k\|}}$, which can be reduced from (2) with $\gamma_k=0$, and $\|s_k\|=\sqrt{\|g_k\|/{\sigma_k}}$. Furthermore, 
\[f(x_k)-m_k(s_k)=\frac23\sqrt{\frac{\|g_k\|}{\sigma_k}}\|g_k\|.\]
Then the argument of Lemma \ref{lem1} still holds. Thus this change will not affect our above analysis.

\section{A Variant of MARC Algorithm}\label{sec4}
Recently, nonmonotone technique is always applied to develop more efficient optimization algorithms. Combining with the nonmonotone line search of Grippo et al. \cite{Grippo1986a}, Raydan \cite{Raydan1997} proposed a globally convergent Barzilai and Borwein gradient method for large scale unconstrained optimization problem, Deng et al. \cite{Deng1993} and Sun \cite{Sun2004} generalized the above nonmonotone rule into the trust region framework. Bianconcini and Sciandrone \cite{Bianconcini2016a} introduced the line search and nonmonotone techniques to the ARC algorithm. In this section, we will present a variant of MARC algorithm based on Hager and Zhang's nonmonotone technique \cite{Zhang2004a}.

To determine the step-size in the line search methods, Hager and Zhang \cite{Zhang2004a} considered the following type of nonmonotone line search rule
\begin{equation*}
f(x_k+\alpha_kd_k) \leq C_k+\sigma\alpha_k g_k^T d_k,
\end{equation*}
where $C_k$ is defined as
\begin{equation*}
C_k=\frac{\eta_{k-1}Q_{k-1}C_{k-1}+f(x_k)}{Q_k},
\end{equation*}
in which $Q_k=\eta_{k-1}Q_{k-1}+1$. The initial value $C_0=f_0$ and $Q_0=1$.

Replacing $f(x_k)$ with $C_k$ in \eqref{rhok}, then we state a variant of MARC algorithm in Algorithm \ref{vmarc}.

\begin{algorithm}[H]
    \caption{A Variant of MARC Algorithm (VMARC)}
    \label{vmarc}
    \begin{algorithmic}[1]
        \REQUIRE $x_0$, $\eta_2\geq\eta_1>0$, $c_1\geq1\geq c_2$, $\gamma_{\max}>\gamma_0>\gamma_{\min}>0$, $\sigma_0>0$, $C_0,Q_0$\\ For $k=0,1,\cdots$ until convergence,
        \STATE Compute $s_k$ by \eqref{sk}.
        \STATE Compute $f(x_k+s_k)$ and \[\rho_k=\frac{C_k-f(x_k+s_k)}{f(x_k)-m_k(s_k)};\]
        \STATE Set
        \begin{align*}
        x_{k+1}=\left\{\begin{array}{ll}x_k+s_k &\text{if}~\rho_k\geq \eta_1,\\x_k&\text{otherwise}.\end{array}\right.
        \end{align*}
        \STATE Set
        \begin{equation}\label{update1}
        \sigma_{k+1}=\left\{
        \begin{array}{lll}
        c_2\sigma_k, & \text{if}~\rho_k>\eta_2 & \text{[very successful iteration]},\\
        \sigma_k, & \text{if}~\eta_1\leq\rho_k\leq\eta_2 & \text{[successful iteration]},\\
        {c_1\sigma_k}, & \text{otherwise} & \text{[unsuccessful iteration]}.
        \end{array}\right.
        \end{equation}
        \STATE Compute $\gamma_{k+1}$ by \eqref{gamma}, \eqref{bb1} or \eqref{bb2}. Set $\gamma_{k+1}=\max\{\gamma_{\min},\min\{\gamma_{k+1},\gamma_{\max}\}\}$.
        \STATE Update $C_k$ and $Q_k$ when $k$ is a very successful or successful iteration.
    \end{algorithmic}
\end{algorithm}

Now we analyze the convergence of this variant of the MARC algorithm. Some of the following analysis is similar to that of MARC method in Sect. \ref{sec3}. For the completeness of the paper, we give a sketch of the proof.
\begin{lemma}\label{lem4}
    Let $\{x_k\}$ be the sequence generated by Algorithm 2. Then for any $k\geq0$, we have
    \begin{equation}\label{ck}
    f_{k+1}\leq C_{k+1}\leq C_k.
    \end{equation}
\end{lemma}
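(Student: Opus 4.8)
The plan is to prove the two inequalities in \eqref{ck} simultaneously by induction on $k$, after first isolating the algebraic fact that each updated $C_{k+1}$ is a \emph{convex combination} of $C_k$ and $f_{k+1}$. From the definition $C_{k+1}=(\eta_kQ_kC_k+f_{k+1})/Q_{k+1}$ together with $Q_{k+1}=\eta_kQ_k+1$, I would write $C_{k+1}=\lambda_kC_k+(1-\lambda_k)f_{k+1}$ with $\lambda_k=\eta_kQ_k/Q_{k+1}$ and $1-\lambda_k=1/Q_{k+1}$. Since $Q_0=1$ and the nonmonotone parameter satisfies $\eta_k\ge0$, a one-line induction gives $Q_k\ge1>0$ for all $k$, whence $\lambda_k\in[0,1)$ and $1-\lambda_k\in(0,1]$; the two weights are nonnegative and sum to $1$, so $C_{k+1}$ lies between $C_k$ and $f_{k+1}$. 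The crucial reduction is then: both desired inequalities follow at once from the single fact $f_{k+1}\le C_k$, because a convex combination $\lambda_kC_k+(1-\lambda_k)f_{k+1}$ with $f_{k+1}\le C_k$ automatically obeys $f_{k+1}\le C_{k+1}\le C_k$.

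Next I would establish $f_{k+1}\le C_k$ on successful and very successful iterations. On such an iteration $x_{k+1}=x_k+s_k$, and by Lemma~\ref{lem1} the predicted reduction $f(x_k)-m_k(s_k)$ is strictly positive (the lower bound in \eqref{pred} vanishes only when $g_k=0$, i.e.\ at a critical point where the algorithm terminates). Consequently the acceptance test $\rho_k\ge\eta_1>0$ from Step~2 of Algorithm~\ref{vmarc} gives $C_k-f_{k+1}=\rho_k\,[f(x_k)-m_k(s_k)]>0$, that is $f_{k+1}<C_k$, and the convex-combination observation above immediately yields $f_{k+1}\le C_{k+1}\le C_k$ for these iterations.

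It remains to handle unsuccessful iterations and to close the induction, and this is where the real bookkeeping lies. I would carry the auxiliary statement $f_k\le C_k$ as the induction hypothesis; it holds at $k=0$ because $C_0=f_0$. On an unsuccessful iteration $x_{k+1}=x_k$, and by Step~6 the quantities $C_k,Q_k$ are \emph{not} updated, so $C_{k+1}=C_k$ and $f_{k+1}=f_k$; the hypothesis then gives $f_{k+1}=f_k\le C_k=C_{k+1}\le C_k$, which verifies \eqref{ck} for this index and restores $f_{k+1}\le C_{k+1}$ for the next step. On a successful iteration the strict inequality $f_{k+1}<C_k$ of the previous paragraph a fortiori restores $f_{k+1}\le C_{k+1}$. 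The main obstacle is precisely this last point: one must not merely chain the displayed inequalities but propagate the hypothesis $f_k\le C_k$ through the ``frozen'' unsuccessful iterations (where $C$ is held fixed), since there the decrease comes from the induction hypothesis rather than from the acceptance test. Combining the two cases completes the induction and proves \eqref{ck}.
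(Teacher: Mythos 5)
Your proof is correct and takes essentially the same approach as the paper: the paper's two-line argument asserts $f(x_{k+1})\leq C_k$ (``obviously'') and then invokes the fact that $C_{k+1}$ is a convex combination of $C_k$ and $f(x_{k+1})$, which are exactly the two ingredients you establish. You have merely filled in what the paper elides --- the acceptance test $\rho_k\geq\eta_1$ with strictly positive predicted reduction on successful iterations, and the induction hypothesis $f_k\leq C_k$ carried through the unsuccessful iterations where $C_k$ and $Q_k$ are frozen.
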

\begin{proof}
    Obviously, there is $f(x_{k+1})\leq C_k$. Since $C_{k+1}$ is a convex combination of $C_k$ and $f(x_{k+1})$, we have $f_{k+1}\leq C_{k+1}\leq C_k$ for each $k$.
\end{proof}

We note that the conclusions of Lemma \ref{lem2} and Lemma \ref{lem3} in Section 2 also hold in this case.

\begin{lemma}\label{lem5}
    Let $f\in C^1$. Suppose that $\epsilon>0$ and $\mathcal{I}$ is an infinite index set such that
    \begin{equation*}
    \|g_k\|\geq\epsilon, k\in \mathcal{I},
    \end{equation*}
    and
    \begin{equation*}
    \sqrt{\frac{\|g_k\|}{\sigma_k}}\rightarrow 0, k\rightarrow \infty, k\in \mathcal{I}.
    \end{equation*}
    Furthermore, if for some $x_*\in\mathbb{R}^n$, $x_k\rightarrow x_*, k\rightarrow \infty, k\in \mathcal{I}$, then each iteration $k\in \mathcal{I}$ that is sufficiently large is very successful, and
    \begin{equation*}
    \sigma_{k+1}\leq\sigma_k
    \end{equation*}
    holds for all $k\in \mathcal{I}$ sufficiently large.
\end{lemma}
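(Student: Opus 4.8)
The plan is to mirror the proof of Lemma \ref{lem2} almost verbatim, exploiting the fact that the denominator of $\rho_k$ in Algorithm \ref{vmarc} is unchanged (it is still $f(x_k)-m_k(s_k)$) and that only the numerator now reads $C_k-f(x_k+s_k)$ in place of $f(x_k)-f(x_k+s_k)$. First I would reproduce the Taylor-expansion estimate of Lemma \ref{lem2}: expanding $f(x_k+s_k)$ about $x_k$ and invoking \eqref{ssk} gives the same upper bound \eqref{diff} on $f(x_k+s_k)-m_k(s_k)$, while Lemma \ref{lem1}, the boundedness $\gamma_k\leq\gamma_{\max}$, and the hypothesis $\sqrt{\|g_k\|/\sigma_k}\to0$ yield $f(x_k)-m_k(s_k)\geq\frac{\|g_k\|}{24}\sqrt{\|g_k\|/\sigma_k}$ for all large $k\in\mathcal{I}$. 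Defining $r_k$ exactly as in \eqref{rk} and arguing, via $\|s_k\|\to0$, $\xi_k\to x_*$, and the continuity of $g$, that $\|g(\xi_k)-g_k\|\to0$, I obtain $r_k<0$ for all sufficiently large $k\in\mathcal{I}$, i.e.
\[
\frac{f(x_k)-f(x_k+s_k)}{f(x_k)-m_k(s_k)}>\eta_2 .
\]

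The only genuinely new ingredient is converting this into the acceptance test used by Algorithm \ref{vmarc}. Here I would invoke Lemma \ref{lem4}, which gives $f(x_k)\leq C_k$ for every $k$. Since under \eqref{ass1} the denominator $f(x_k)-m_k(s_k)$ is strictly positive (it is bounded below by a positive quantity whenever $\|g_k\|\geq\epsilon$), enlarging the numerator from $f(x_k)-f(x_k+s_k)$ to the no-smaller quantity $C_k-f(x_k+s_k)$ can only increase the ratio. Hence
\[
\rho_k=\frac{C_k-f(x_k+s_k)}{f(x_k)-m_k(s_k)}\geq\frac{f(x_k)-f(x_k+s_k)}{f(x_k)-m_k(s_k)}>\eta_2 ,
\]
so the $k$-th iteration is very successful and, by the update rule \eqref{update1}, $\sigma_{k+1}\leq\sigma_k$ for all $k\in\mathcal{I}$ sufficiently large.

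I expect no real obstacle: the entire difficulty of Lemma \ref{lem2}, namely controlling the model–function gap through the Taylor remainder, transfers unchanged, and what remains is only the elementary domination $\rho_k^{\mathrm{VMARC}}\geq\rho_k^{\mathrm{MARC}}$ supplied by $C_k\geq f(x_k)$. The single point that deserves care is checking that the two ratios share the \emph{same} denominator $f(x_k)-m_k(s_k)$ (and not $C_k-m_k(s_k)$) and that this denominator is strictly positive, so that replacing $f(x_k)$ by the larger $C_k$ in the numerator alone preserves the direction of the inequality.
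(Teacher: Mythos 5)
Your proposal is correct and follows essentially the same route as the paper: both rest on the Lemma \ref{lem2} estimates together with $f_k\leq C_k$ from Lemma \ref{lem4} and the strict positivity of the common denominator $f(x_k)-m_k(s_k)$. The only cosmetic difference is that the paper absorbs the nonpositive term $(f_k-C_k)$ directly into a redefined residual $r_k:=f(x_k+s_k)-C_k-\eta_2[m_k(s_k)-f(x_k)]$, whereas you first establish the MARC ratio inequality and then dominate $\rho_k^{\mathrm{VMARC}}\geq\rho_k^{\mathrm{MARC}}>\eta_2$, which is mathematically equivalent.
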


\begin{proof}
    Firstly, we have that
    \begin{equation*}
    f(x_k+s_k)-m_k(s_k)\leq\Bigg\{\|g(\xi_k)-g_k\|+\frac{\gamma_k}2\sqrt{\frac{\|g_k\|}{\sigma_k}}\Bigg\}\cdot\sqrt{\frac{\|g_k\|}{\sigma_k}},
    \end{equation*}
    and
    \begin{equation*}
    f(x_k)-m_k(s_k)\geq\frac{\|g_k\|}{24}\sqrt{\frac{\|g_k\|}{\sigma_k}},
    \end{equation*}
    for all $k\in \mathcal{I}$ sufficiently large.
    Thus, by \eqref{diff}, we obtain that
    \begin{align}\label{rk1}
    r_k:=&f(x_k+s_k)-C_k-\eta_2[m_k(s_k)-f(x_k)]\nonumber\\
    =&f(x_k+s_k)-m_k(s_k)+(1-\eta_2)[m_k(s_k)-f(x_k)]+(f_k-C_k)\nonumber\\
    \leq&\sqrt{\frac{\|g_k\|}{\sigma_k}}\Bigg\{\|g(\xi_k)-g_k\|+\frac{\gamma_{\max}}2\sqrt{\frac{\|g_k\|}{\sigma_k}}-\frac{(1-\eta_2)\epsilon}{24}\Bigg\}+(f_k-C_k),
    \end{align}
    for all $k\in\mathcal{I}$ sufficiently large.
    
    Since $\xi_k$ belongs to the line segment $(x_k,x_k+s_k)$, we have $\|\xi_k-x_*\|\leq\|x_k-x_*\|+\|s_k\|$. In addition, $\|s_k\|\rightarrow 0$ and $\xi_k\rightarrow x_*$, as $k\rightarrow \infty$, $k\in \mathcal{I}$ and $g$ is continuous, we conclude that $\|g(\xi_k)-g_k\|\rightarrow 0$ as $k\rightarrow \infty$, $k\in \mathcal{I}$.
    
    Thus, the limit of $\sqrt{\|g_k\|/\sigma_k}$, \eqref{rk1} and \eqref{ck} imply that $r_k<0$, for all $k\in \mathcal{I}$ sufficiently large. Recalling that
    \begin{equation*}
    r_k<0 \Leftrightarrow \rho_k=\frac{f(x_k)-f(x_k+s_k)}{f(x_k)-m_k(s_k)}>\eta_2,
    \end{equation*}
    we conclude that the $k$-th iteration is very successful, and hence $\sigma_{k+1}\leq\sigma_k$.
\end{proof}

\begin{theorem}\label{theo3}
    Let $f\in C^1$. Suppose that $\{x_k\}$ is the sequence generated by Algorithm \ref{vmarc}. If $\{f(x_k)\}$ is bounded below, then
    \begin{equation}\label{them3}
    \liminf_{k\rightarrow\infty}\|g_k\|=0.
    \end{equation}
\end{theorem}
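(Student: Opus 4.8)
The plan is to run the same contradiction argument used for Theorem \ref{theo1}, with the single structural change that the monotone decrease of $\{f(x_k)\}$ is replaced by the monotone decrease of the reference values $\{C_k\}$ supplied by Lemma \ref{lem4}. First I would dispose of the easy case: if there are only finitely many successful iterations, then the conclusions of Lemma \ref{lem3} hold (as noted in the text for the VMARC setting), so $x_k=x_*$ for all large $k$ with $g(x_*)=0$, giving \eqref{them3}. Hence I may assume infinitely many successful iterations occur and argue by contradiction, supposing $\|g_k\|\geq\epsilon$ for all $k$ and some $\epsilon>0$, and recalling the notation $\mathcal{S}$.

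The central step is to re-establish the summability estimate \eqref{ggk} without monotonicity of $f$. For $k\in\mathcal{S}$ the acceptance test gives $C_k-f(x_{k+1})\geq\eta_1[f(x_k)-m_k(s_k)]$. Writing $C_{k+1}$ as the convex combination $C_{k+1}=(\eta_k Q_k C_k+f_{k+1})/Q_{k+1}$ with $Q_{k+1}=\eta_k Q_k+1$ yields the identity $C_k-C_{k+1}=(C_k-f_{k+1})/Q_{k+1}$, so that
\[
C_k-C_{k+1}\geq\frac{\eta_1}{Q_{k+1}}\big[f(x_k)-m_k(s_k)\big],\qquad k\in\mathcal{S},
\]
while $C_{k+1}=C_k$ on unsuccessful iterations since $C_k,Q_k$ are updated only at successful steps. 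Provided the parameters $\eta_k$ are chosen so that $\{Q_k\}$ is bounded above, say by $Q_{\max}$, and recalling from Lemma \ref{lem4} that $\{C_k\}$ is nonincreasing and bounded below (as $C_k\geq f(x_k)$ and $\{f(x_k)\}$ is bounded below) hence convergent, I would sum the displayed inequality over $k\in\mathcal{S}$; the left side telescopes to a finite quantity. Combining this with the model-decrease bound \eqref{pred} and $\|g_k\|\geq\epsilon$, exactly as in the passage from \eqref{temp0} to \eqref{ggk}, forces the minimum in \eqref{pred} to be attained eventually at $\tfrac12\sqrt{\|g_k\|/\sigma_k}$ and yields $\sum_{k\in\mathcal{S}}\sqrt{\|g_k\|/\sigma_k}<+\infty$, whence $\sqrt{\|g_k\|/\sigma_k}\to0$ and $\sigma_k\to\infty$.

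From here the remaining steps transfer essentially verbatim from Theorem \ref{theo1}, since they depend on \eqref{ssk}, the boundedness of $\gamma_k$, and the updating scheme rather than on any monotonicity of $f$. Using \eqref{ssk} together with the summability I would show $\{x_k\}$ is Cauchy, so $x_k\to x_*$ for some $x_*\in\mathbb{R}^n$. Then the hypotheses of Lemma \ref{lem5} (the nonmonotone analogue of Lemma \ref{lem2}) hold with $\mathcal{I}:=\mathcal{S}$, so every sufficiently large successful iteration is very successful and $\sigma_{k+1}\leq\sigma_k$. A final step, identical to the closing argument of Theorem \ref{theo1} — extracting a subsequence of very successful iterations each preceded by an unsuccessful one and applying Lemma \ref{lem5} to the shifted index set — shows that in fact all sufficiently large iterations belong to $\mathcal{S}$. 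Consequently $\{\sigma_k\}$ is eventually nonincreasing and therefore bounded above, contradicting $\sigma_k\to\infty$ and proving \eqref{them3}.

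The hard part will be the summability step: the monotone proof telescopes the exact decrease $f(x_k)-f(x_{k+1})$, which is unavailable here. The careful point is to convert the nonmonotone acceptance inequality into a genuine decrease of $C_k$ via the convex-combination identity $C_k-C_{k+1}=(C_k-f_{k+1})/Q_{k+1}$, and then to control the weights $1/Q_{k+1}$ uniformly, which is precisely where the boundedness of $\{Q_k\}$ is needed. Once \eqref{ggk} is recovered, the Cauchy (geometric) argument and the $\sigma_k$ (scheme-based) argument are insensitive to the nonmonotonicity and carry over directly from Theorem \ref{theo1}.
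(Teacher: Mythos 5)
Your proposal follows essentially the same route as the paper's own proof of Theorem \ref{theo3}: the paper likewise converts the acceptance test $\rho_k\geq\eta_1$ into the decrease $C_k-C_{k+1}\geq\frac{\eta_1}{Q_{k+1}}\bigl[f(x_k)-m_k(s_k)\bigr]$ via the convex-combination identity for $C_{k+1}$, invokes Lemma \ref{lem4} to get that $\{C_k\}$ is nonincreasing and bounded below (hence convergent), deduces $\sqrt{\|g_k\|/\sigma_k}\rightarrow0$, and then states that the remaining Cauchy-sequence, Lemma \ref{lem5}, and $\sigma_k$-update arguments carry over verbatim from Theorem \ref{theo1} --- exactly the steps you spell out. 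Your explicit insistence that $\{Q_k\}$ be bounded above (i.e., $\eta_k$ bounded away from $1$) is a condition the paper uses only tacitly when passing from $C_k-C_{k+1}\rightarrow0$ to $\sqrt{\|g_k\|/\sigma_k}\rightarrow0$, so flagging it is a refinement of, not a departure from, the paper's argument.
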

\begin{proof}
    By way of contradiction, we assume that there is some $\epsilon>0$ such that
    \begin{equation}\label{gk3}
    \|g_k\|\geq\epsilon
    \end{equation}
    holds for all $k\geq0$. It follows from $\rho_k\geq\eta_1$ and Lemma 2 that
    \begin{align*}
    f(x_{k+1})\leq C_k-\frac{\eta_1\epsilon}{6\sqrt{2}}\min\Bigg\{\frac{\epsilon}{\gamma_{\max}},\frac12\sqrt{\frac{\|g_k\|}{\sigma_k}}\Bigg\}, k\in\mathcal{S}.
    \end{align*}
    which, together with \eqref{gk3} and the definition of $C_k$, implies
    \begin{align}\label{tempp}
    C_k-C_{k+1}=&C_k-\frac{\eta_kQ_kC_k+f_{k+1}}{Q_{k+1}}\nonumber\\
    \geq&C_k-\frac{\eta_kQ_kC_k+C_k-\frac{\eta_1\epsilon}{6\sqrt{2}}\min\Big\{\frac{\epsilon}{\gamma_{\max}},\frac12\sqrt{\frac{\|g_k\|}{\sigma_k}}\Big\}}{Q_{k+1}}\nonumber\\
    =&\frac{\eta_1\epsilon}{12Q_{k+1}}\min\Bigg\{\frac{\epsilon}{\gamma_{\max}},\frac12\sqrt{\frac{\|g_k\|}{\sigma_k}}\Bigg\}.
    \end{align}
    
    Since $\{f(x_k)\}$ is bounded below and $f_k\leq C_k$, so $\{C_k\}$ is bounded below. Also, \eqref{tempp} implies that $\{C_k\}$ is monotonically decreasing, and hence it is convergent. Therefore the left-hand side of \eqref{tempp} converges to zero as $k\rightarrow\infty$, thus,
    \begin{equation*}
    \sqrt{\|g_k\|/\sigma_k}\rightarrow 0, k\rightarrow\infty.
    \end{equation*}
    The rest analysis is similar to that of Theorem \ref{theo1}, so we omit it.
\end{proof}

Furthermore, if we assume that $g$ is uniformly continuous, we have the strong convergence of the VMARC method.
\begin{theorem}
    Let $f\in C^1$ and its gradient $g$ is uniformly continuous, $\{x_k\}$ is the sequence generated by Algorithm \ref{vmarc}. If $\{f(x_k)\}$ is bounded below, then
    \begin{equation*}
    \lim_{k\rightarrow\infty}\|g_k\|=0.
    \end{equation*}
\end{theorem}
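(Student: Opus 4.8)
The plan is to mirror the proof of Theorem \ref{th2}, replacing the monotone decrease of $\{f(x_k)\}$ by the decrease of the nonmonotone reference values $\{C_k\}$ established in Theorem \ref{theo3}. First I would dispose of the trivial case: if there are only finitely many successful iterations, then Lemma \ref{lem3} (which the paper notes remains valid for VMARC) gives $g(x_*)=0$ and the conclusion is immediate. So assume infinitely many successful iterations and argue by contradiction: suppose there is an infinite subsequence $\{t_i\}\subseteq\mathcal{S}$ and some $\epsilon>0$ with $\|g_{t_i}\|\geq 2\epsilon$ for all $i$. Since Theorem \ref{theo3} gives $\liminf\|g_k\|=0$, for each $t_i$ there is a first successful index $l_i>t_i$ with $\|g_{l_i}\|<\epsilon$; consequently $\|g_k\|\geq\epsilon$ whenever $t_i\leq k<l_i$.

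The heart of the argument is to convert the per-step decrease of $C_k$ into a bound on the step length $\|s_k\|$. From the estimate \eqref{tempp} obtained in the proof of Theorem \ref{theo3},
\begin{equation*}
C_k-C_{k+1}\geq\frac{\eta_1\epsilon}{12Q_{k+1}}\min\Bigg\{\frac{\epsilon}{\gamma_{\max}},\frac12\sqrt{\frac{\|g_k\|}{\sigma_k}}\Bigg\}
\end{equation*}
holds on this block. Since $\{C_k\}$ is monotonically decreasing and bounded below it converges, so $C_k-C_{k+1}\to 0$; provided $\{Q_k\}$ stays bounded above by some $Q_{\max}$ (which holds in the Hager--Zhang scheme whenever the averaging parameters $\eta_k$ are bounded away from $1$), this forces $\sqrt{\|g_k\|/\sigma_k}\to 0$ along the block, so the minimum is eventually attained at its second term. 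Combining this with \eqref{ssk}, namely $\|s_k\|\leq\sqrt{\|g_k\|/\sigma_k}$, I obtain for all sufficiently large $k$ in the block
\begin{equation*}
C_k-C_{k+1}\geq\frac{\eta_1\epsilon}{24Q_{\max}}\|s_k\|.
\end{equation*}

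Finally I would telescope. Because $x_{k+1}=x_k$ and $C_{k+1}=C_k$ at unsuccessful iterations, summing the last inequality over $t_i\leq k<l_i$ gives
\begin{equation*}
\|x_{t_i}-x_{l_i}\|\leq\sum_{k=t_i}^{l_i-1}\|s_k\|\leq\frac{24Q_{\max}}{\eta_1\epsilon}\big(C_{t_i}-C_{l_i}\big),
\end{equation*}
and the right-hand side tends to $0$ as $i\to\infty$ since $\{C_k\}$ converges. Uniform continuity of $g$ then yields $\|g_{t_i}-g_{l_i}\|\to 0$, which contradicts $\|g_{t_i}-g_{l_i}\|\geq\|g_{t_i}\|-\|g_{l_i}\|\geq 2\epsilon-\epsilon=\epsilon$. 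This contradiction establishes $\lim_{k\to\infty}\|g_k\|=0$. The main obstacle---and the only place where the nonmonotone structure genuinely complicates matters---is controlling the factor $1/Q_{k+1}$: one must verify that $\{Q_k\}$ is bounded above (equivalently that the $\eta_k$ are bounded away from $1$), since otherwise neither the passage from $C_k-C_{k+1}\to 0$ to $\sqrt{\|g_k\|/\sigma_k}\to 0$ nor the uniform step-length bound would survive.
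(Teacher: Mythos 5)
Correct, and essentially the paper's own route: the paper omits this proof as ``similar to that of Theorem~\ref{th2}'', and your argument---disposing of the finitely-many-successful case via Lemma~\ref{lem3}, then running the $t_i$/$l_i$ contradiction scheme with the monotone decrease of $\{C_k\}$ from \eqref{tempp} replacing that of $\{f(x_k)\}$, converting it to the step-length bound $C_k-C_{k+1}\geq\frac{\eta_1\epsilon}{24Q_{\max}}\|s_k\|$, telescoping over $t_i\leq k<l_i$, and closing with uniform continuity of $g$---is precisely the intended adaptation. Your caveat that $\{Q_k\}$ must be bounded above (equivalently, $\eta_k$ bounded away from $1$) flags a hypothesis the paper leaves implicit rather than a flaw in your own argument: the paper's proof of Theorem~\ref{theo3} already uses it tacitly when passing from $C_k-C_{k+1}\to0$ in \eqref{tempp} to $\sqrt{\|g_k\|/\sigma_k}\to0$, and it holds for the paper's experimental choice $\eta_k\equiv0.7$.
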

\begin{proof}
    The proof is similar to that of Theorem \ref{th2}, so we omit it.
\end{proof}

\section{Numerical Experiments}\label{sec5}
In this section, through testing a set of 54 unconstrained optimization problems from CUTEst library \cite{Gould2014a}, we analysis the effectiveness of Algorithm VMARC and compare it with the TRMSM algorithm of Zhou et al. \cite{Zhou2016}, the standard trust region method and adaptive regularization algorithm with cubics. All the codes were written in Fortran 77 with double precision and run on a personal desktop computer with 3.60 GHz CPU, 4.0 GB memory and 64-bit Ubuntu operation system.

In all our tests, the parameters in Algorithm VMARC are as follows: $\sigma_0=1$, $\eta_1=0.1$, $\eta_2=0.75$, $c_1=5$, $c_2=0.2$, $\gamma_{\max}=10^6$, $\psi_k=0.2$ and $\eta_k=0.7$. We call the VMARC type method employing \eqref{gamma}, \eqref{bb1}, \eqref{bb2} as the $\gamma_k$ parameter the MARC1 method, MARC2 method and MARC3 method, respectively. The corresponding TRMSM type methods are the TRMSM1 method, TRMSM2 method and TRMSM3 method.

Our experiments contains two parts. In the first part, we give some numerical comparisons between TRMSM3, MARC3, standard TR and ARC methods. During this work, the TRU and ARC packages (Fortran implementation of the standard TR and ARC methods) of GALAHAD \cite{Gould2003a} are used and we change the stopping criteria of MARC3 and TEMSM3 methods as TRU and ARC packages, that is, we stop the iteration when \[\|g_k\|\leq\max\{10^{-5},10^{-9}\|g_0\|\}.\]
The detailed numerical results\footnote{The complete data can be downloaded from https://pan.baidu.com/s/1CWF2WoQSdSksIbwi-4AXsg.} are included in Table 1 with the the number of iterations (iter), the number of function-gradient evaluations (nf) and the cost time (cpu). The sign `-' denotes a failure of the algorithm.

\begin{figure}[htbp]
    \centering
    \includegraphics[scale=0.39]{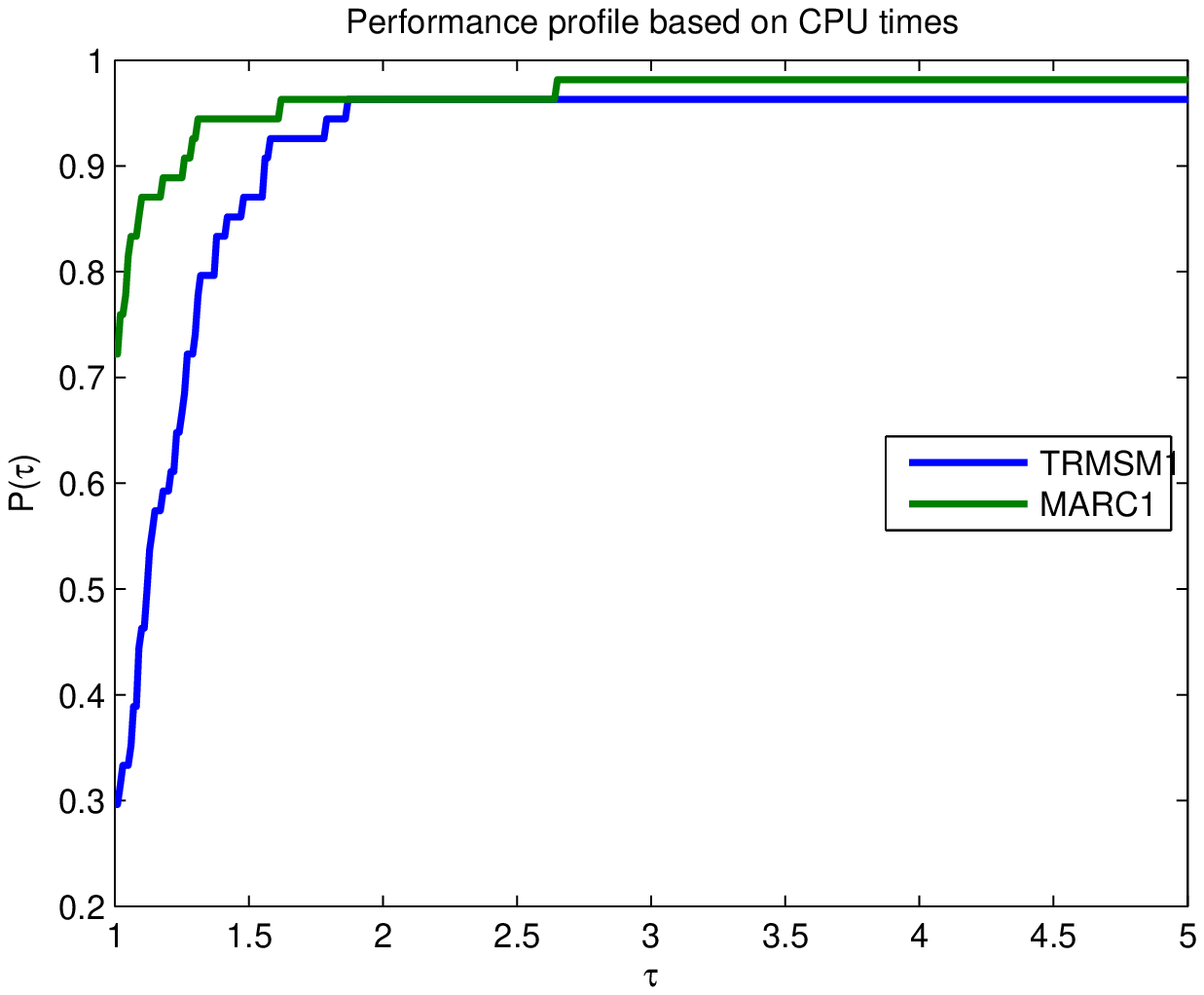}
    \includegraphics[scale=0.39]{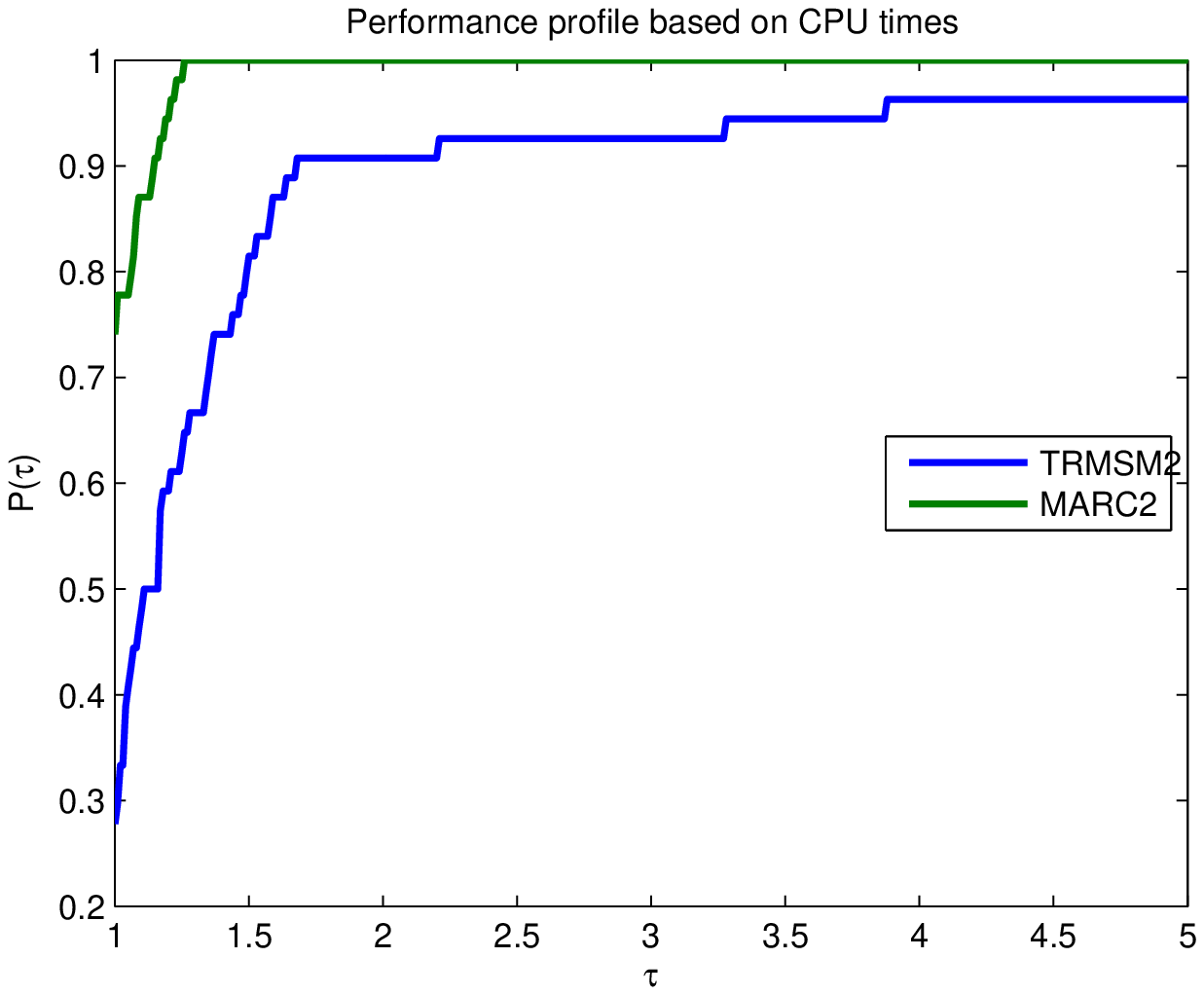}
    \includegraphics[scale=0.39]{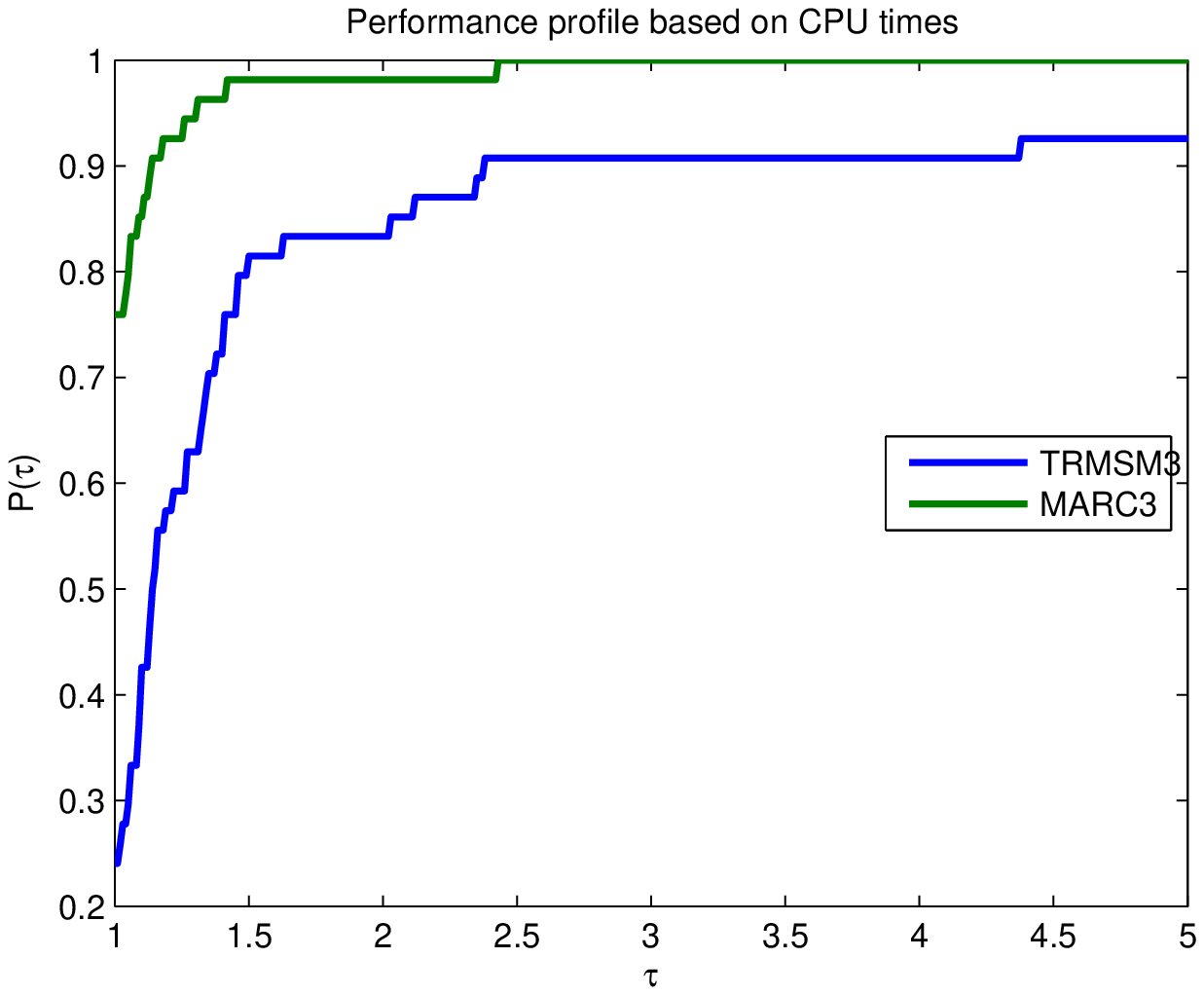}
    \includegraphics[scale=0.39]{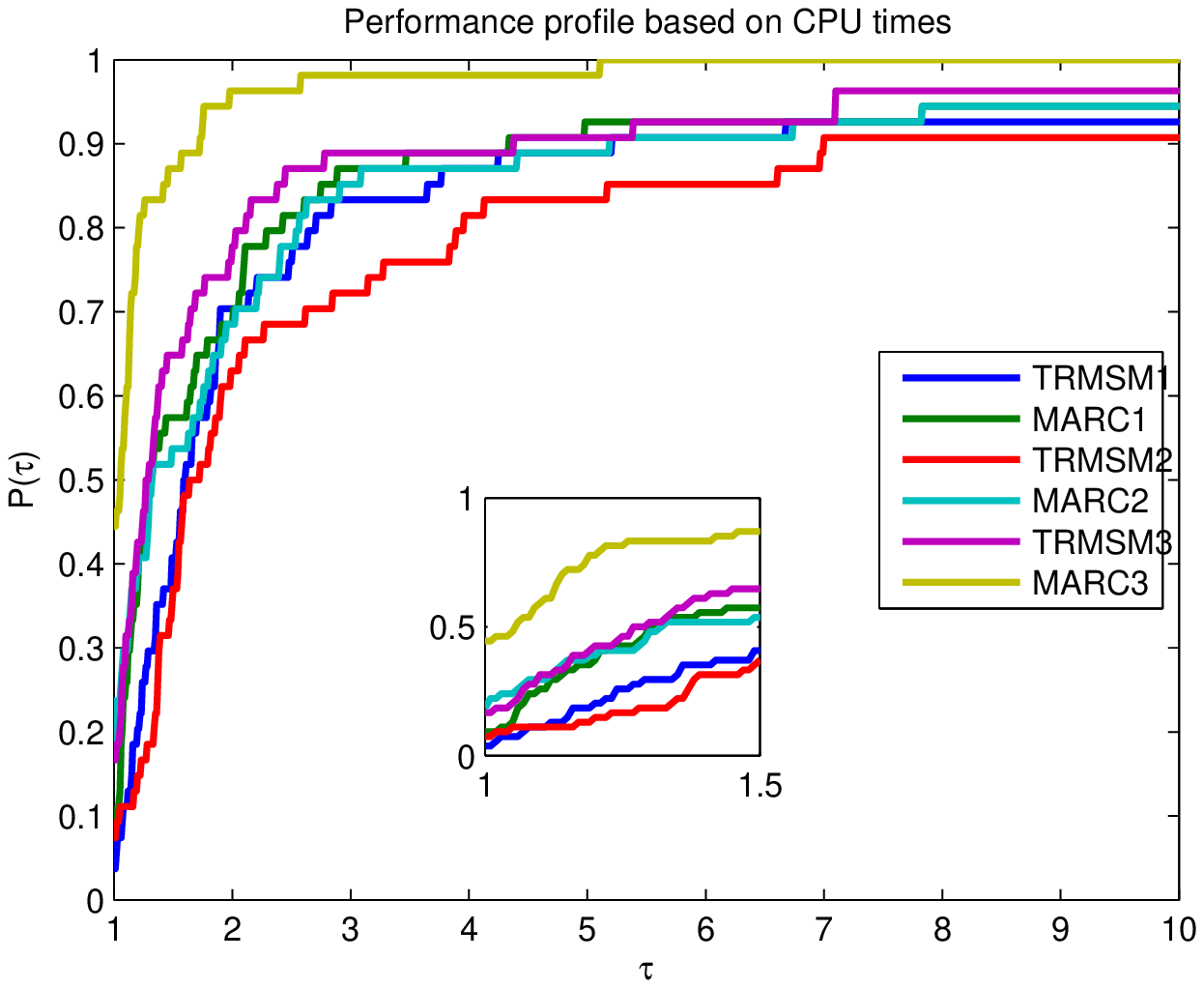}
    \caption{Performance profiles based on CPU time}
    \label{fig1}
\end{figure}
For each problem, some more information are also given in Table 1, $r$ is the ratio of the cost time of computing the exact Hessians once and cost time of one function-gradient calculation, the 4th column Time gives the cost time of 20 function-gradient calculations. For different $r$, when the exact Hessian is used, the TRU and ARC method always need less iterations but more cost time than the TRMSM3 and MARC3 methods. Firstly, when $r$ is very large, such as 3641 for problem VARDIM, calculating the exact Hessians even once could cost much computing resource. On the other hand, even if $r$ is small, another inner iterative algorithm for solving trust region or cubic model subproblems is still needed and this step may cost much time. But for the TRMSM3 or MARC3 method, we only use the function value and the gradient information, and at each step, $O(n)$ operations are needed to form $\gamma_k$.

Next, we mainly compare MARC with TRMSM methods when employing different $\gamma_k$. We terminate the iteration when
\begin{equation*}
\|g_k\|_\infty\leq10^{-6}(1+|f(x_k)|).
\end{equation*}
In addition, we also stop the algorithm if the number of iterations exceeds 5000. The detailed numerical results are given in Tables \ref{tab2}--\ref{tab3}. Dolan and Mor\'{e}'s \cite{Dolan2002} performance profiles are used to display the behaviors of these methods.

Figure \ref{fig1} shows comparison of different methods based on cpu time. By using the same $\gamma_k$ parameter, the VMARC algorithm obviously outperform the TRMSM algorithm. Among these six methods, the MARC3 method has the best numerical performance, which can solve about 45\% of the tested problems in the shortest time. Similar conclusions can be draw from Figure \ref{fig2}, which show the performance profiles based on iterations and function evaluations.

\begin{figure}[htbp]
    \centering
    \includegraphics[scale=0.39]{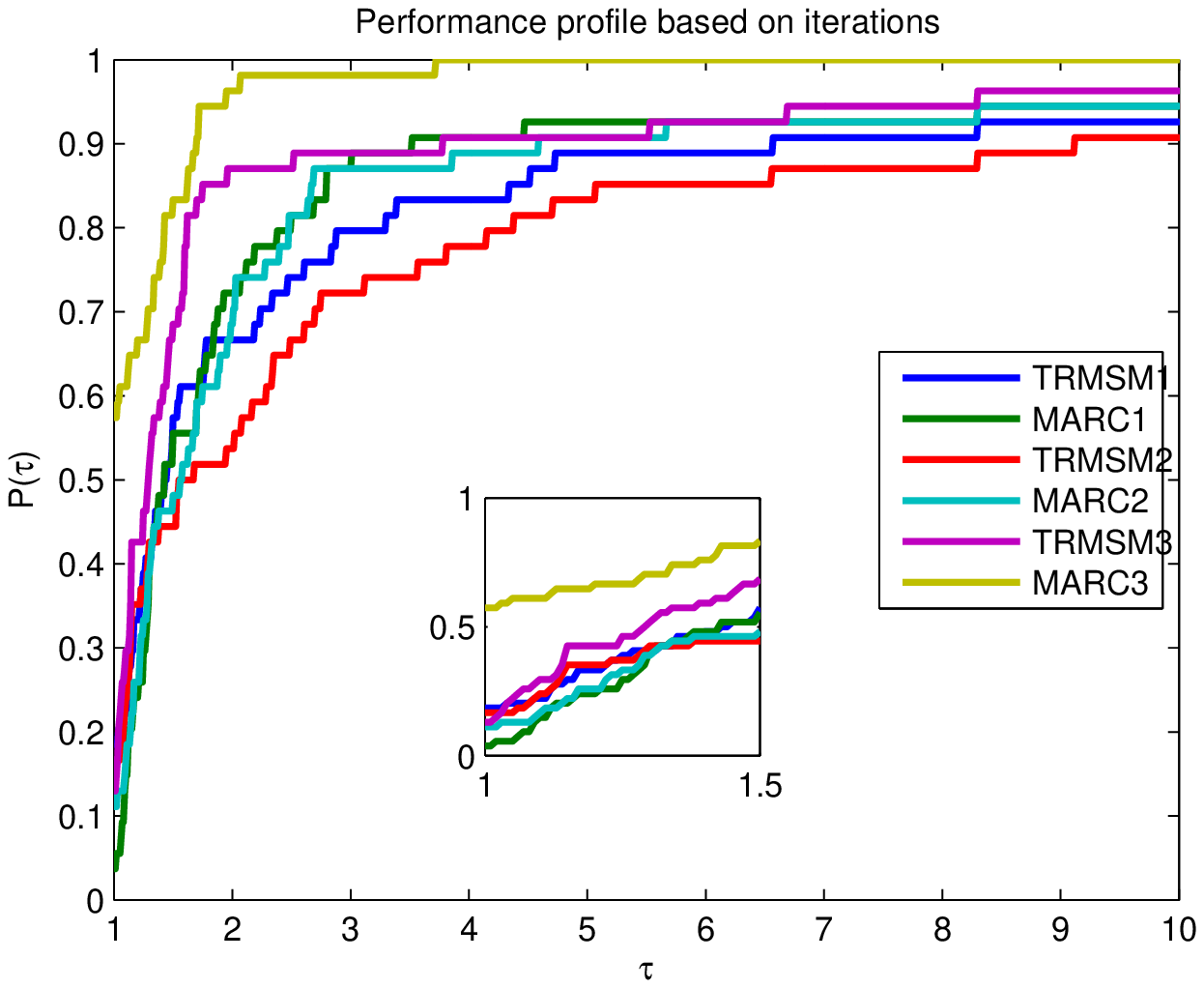}
    \includegraphics[scale=0.39]{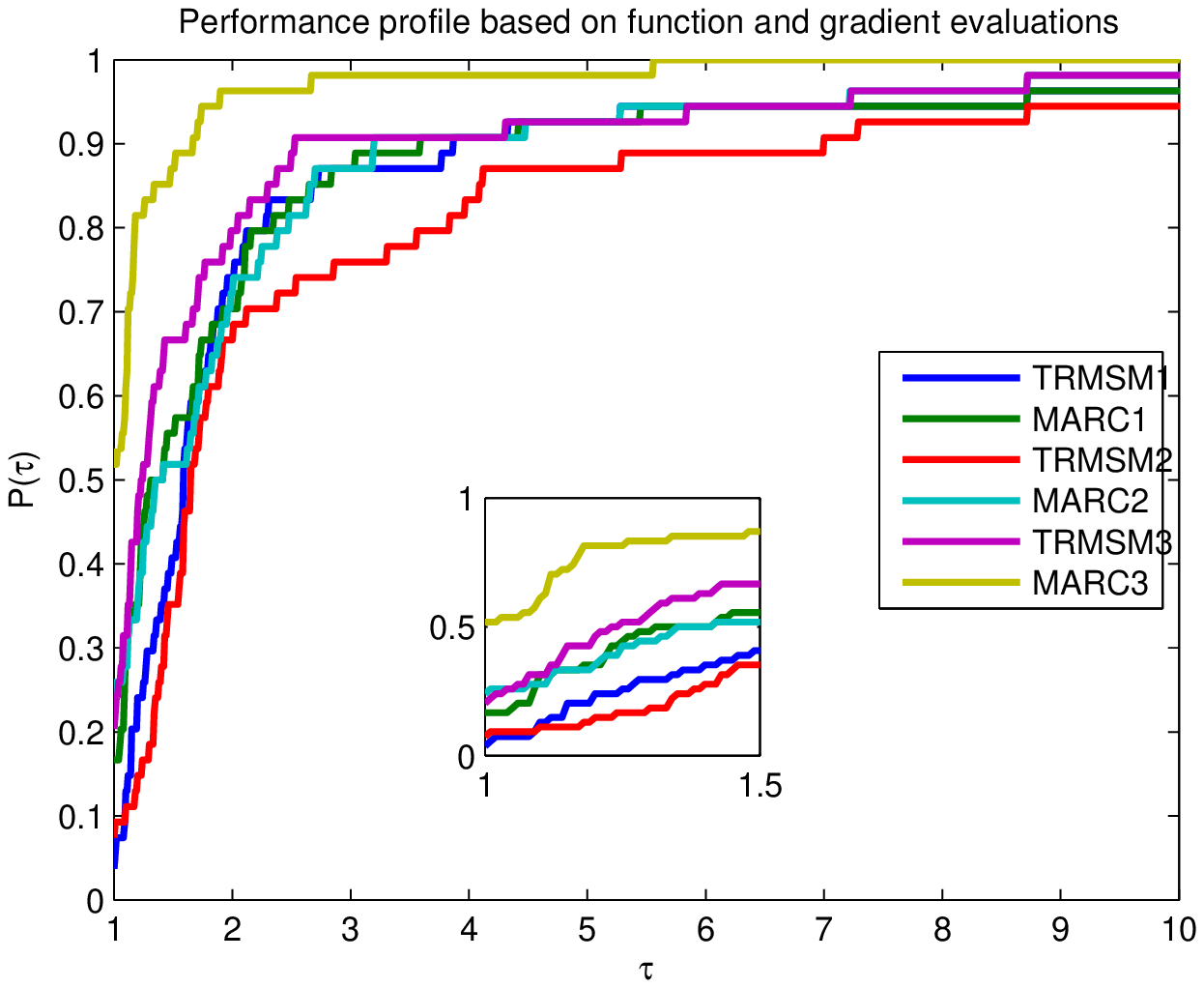}
    \caption{Performance profiles based on iterations and function evaluations}
    \label{fig2}
\end{figure}

\section{Conclusion}\label{sec6}
In this paper, combing with the Barzilai-Borwein step size, we present a modified adaptive cubic regularization method for large-scale unconstrained optimization problem. With nonmonotone technique, a variant of the modified method is also proposed. Convergence of the method is analyzed under some reasonable assumptions. Numerical experiments are performed to show the effectiveness of the new methods.

\begin{sidewaystable}[ht]
    \centering
    \caption{Numerical comparisons between ARC/TR and TRMSM/MARC methods}
    \vskip2mm
    \newsavebox{\tableboxaab}
    \begin{lrbox}{\tableboxaab}
        \begin{tabular}{lrrlrrrrrrrrrr}
            \toprule        
            \multicolumn{1}{c}{\multirow{2}[0]{*}{Problem}} & \multicolumn{1}{c}{\multirow{2}[0]{*}{Dim}} & \multicolumn{1}{c}{\multirow{2}[0]{*}{$r$}} & \multicolumn{1}{c}{\multirow{2}[0]{*}{Time}} & \multicolumn{2}{c}{ARC} & \multicolumn{2}{c}{TRU} & \multicolumn{3}{c}{TRMSM3} & \multicolumn{3}{c}{MARC3} \\
            \cmidrule(lr){5-6}\cmidrule(lr){7-8}\cmidrule(lr){9-11}\cmidrule(lr){12-14}
            &       &       &       & \multicolumn{1}{l}{iter} & \multicolumn{1}{l}{cpu} & \multicolumn{1}{l}{iter} & \multicolumn{1}{l}{cpu} & \multicolumn{1}{l}{iter} & \multicolumn{1}{l}{ng} & \multicolumn{1}{l}{cpu} & \multicolumn{1}{l}{iter} & \multicolumn{1}{l}{ng} & \multicolumn{1}{l}{cpu} \\
            \midrule
            POWER & 5000  & 8328.6 & 2.67-03 & 26    & 27.497 & 33    & 32.747 & 55    & 125   & 0.023 & 92    & 170   & 0.032 \\
            VARDIM & 2000  & 3641.1 & 1.59-03 & 27    & 6.873 & 719   & 182.221 & 52    & 157   & 0.015 & 5     & 36    & 0.003 \\
            BROWNAL & 200   & 586.47 & 1.49-02 & 7     & 0.349 & 7     & 0.315 & 6     & 23    & 0.004 & 4     & 11    & 0.002 \\
            PENALTY1 & 1000  & 456.2 & 1.02-03 & 28    & 0.759 & 37    & 0.933 & 12    & 46    & 0.003 & 6     & 14    & 0.001 \\
            FMINSURF & 1024  & 246.9 & 2.41-03 & 373   & 32.209 & 117   & 8.416 & 1172  & 1799  & 0.242 & 821   & 1548  & 0.202 \\
            SPARSQUR & 5000  & 218.2 & 1.07-02 & 22    & 0.205 & 20    & 0.153 & 22    & 41    & 0.024 & 17    & 22    & 0.013 \\
            DQRTIC & 2000  & 180.7 & 3.11-03 & 28    & 0.009 & 36    & 0.012 & 39    & 82    & 0.007 & 31    & 47    & 0.004 \\
            SROSENBR & 5000  & 140.9 & 4.05-03 & 12    & 0.016 & 10    & 0.012 & 27    & 59    & 0.015 & 29    & 49    & 0.013 \\
            NONDIA & 5000  & 102.1 & 5.61-03 & 29    & 0.033 & 24    & 0.03  & 4     & 25    & 0.008 & 5     & 14    & 0.004 \\
            WOODS & 4000  & 87.8  & 4.23-03 & 56    & 0.062 & 67    & 0.077 & 195   & 329   & 0.089 & 453   & 860   & 0.224 \\
            DQDRTIC & 5000  & 73.1  & 7.72-03 & 9     & 0.013 & 2     & 0.004 & 20    & 29    & 0.014 & 35    & 36    & 0.017 \\
            ARWHEAD & 5000  & 70.5  & 8.26-03 & 10    & 0.015 & 6     & 0.009 & 13    & 29    & 0.013 & 14    & 21    & 0.01 \\
            ENGVAL1 & 5000  & 65.8  & 8.70-03 & 12    & 0.035 & 9     & 0.026 & 25    & 34    & 0.018 & 22    & 23    & 0.012 \\
            SINQUAD & 5000  & 63.1  & 9.38-03 & 34    & 0.089 & 45    & 0.118 &    -   &   -    &    -   & 305   & 591   & 0.428 \\
            CRAGGLVY & 5000  & 40.8  & 1.44-02 & 17    & 0.102 & 15    & 0.082 & 144   & 246   & 0.234 & 138   & 263   & 0.252 \\
            DIXMAANJ & 3000  & 36.7  & 5.79-03 & 50    & 0.273 & 76    & 0.28  & 683   & 1075  & 0.355 & 444   & 839   & 0.277 \\
            DIXMAANJ& 9000  & 275.1 & 1.72-02 & 68    & 1.086 & 80    & 0.873 & 531   & 840   & 0.826 & 410   & 776   & 0.752 \\
            DIXMAAND & 3000  & 35.7  & 5.91-03 & 16    & 0.082 & 17    & 0.056 & 10    & 17    & 0.005 & 11    & 12    & 0.004 \\
            DIXMAAND& 9000  & 108.3 & 1.74-02 & 17    & 0.326 & 43    & 0.275 & 9     & 16    & 0.016 & 12    & 13    & 0.014 \\
            MOREBV & 5000  & 31.2  & 1.90-02 & 2     & 0.286 & 1     & 0.312 & 134   & 213   & 0.083 & 129   & 247   & 0.091 \\
            COSINE & 2000  & 24.2  & 3.69-03 & 42    & 0.053 &   -    &   -    & 11    & 13    & 0.004 & 15    & 17    & 0.005 \\
            EDENSCH & 2000  & 21.3  & 4.42-03 & 17    & 0.027 & 16    & 0.024 & 27    & 36    & 0.009 & 29    & 30    & 0.008 \\
            BROYDN7D & 5000  & 20.2  & 2.92-02 & 767   & 13.791 & 679   & 6.553 & 2325  & 3563  & 6.938 & 2039  & 3688  & 7.159 \\
            EXTROSNB & 5000  & 13.3  & 1.01-03 & 685   & 16.134 & 703   & 1.788 & 35    & 45    & 0.015 & 76    & 143   & 0.045 \\
            EG2   & 1000  & 6.9   & 1.96-03 & 7     & 0.003 & 3     & 0.001 & 5     & 16    & 0.002 & 6     & 11    & 0.001 \\
            SENSORS & 1000  & 1.7   & 5.24-00 & 683   & 316.162 & 33    & 36.078 & 42    & 66    & 22.958 & 29    & 33    & 9.99 \\
            \bottomrule
        \end{tabular}%
    \end{lrbox}
    \scalebox{1}{\usebox{\tableboxaab}}
    \label{tab1}%
\end{sidewaystable}%

\begin{sidewaystable}[ht]
    \centering
    \caption{Numerical results of VMARC and TRMSM methods}
    \vskip2mm
    \newsavebox{\tableboxaac}
    \begin{lrbox}{\tableboxaac}
        \begin{tabular}{lrrrrrrrrrrrrrrrrrrr}
            \toprule
            \multicolumn{1}{c}{\multirow{2}[0]{*}{Problem}} & \multicolumn{1}{c}{\multirow{2}[0]{*}{n}} & \multicolumn{3}{c}{TRMSM1} & \multicolumn{3}{c}{MARC1} & \multicolumn{3}{c}{TRMSM2} & \multicolumn{3}{c}{MARC2} & \multicolumn{3}{c}{TRMSM3} & \multicolumn{3}{c}{MARC3} \\
            \cmidrule(lr){3-5}\cmidrule(lr){6-8}\cmidrule(lr){9-11}\cmidrule(lr){12-14}\cmidrule(lr){15-17}\cmidrule(lr){18-20}
            &       & \multicolumn{1}{c}{iter} & \multicolumn{1}{c}{nf} & \multicolumn{1}{c}{cpu} & \multicolumn{1}{c}{iter} & \multicolumn{1}{c}{nf} & \multicolumn{1}{c}{cpu} & \multicolumn{1}{c}{iter} & \multicolumn{1}{c}{nf} & \multicolumn{1}{c}{cpu} & \multicolumn{1}{c}{iter} & \multicolumn{1}{c}{nf} & \multicolumn{1}{c}{cpu} & \multicolumn{1}{c}{iter} & \multicolumn{1}{c}{nf} & \multicolumn{1}{c}{cpu} & \multicolumn{1}{c}{iter} & \multicolumn{1}{c}{nf} & \multicolumn{1}{c}{cpu} \\
            \midrule
            ARWHEAD & 10000 & 10    & 27    & 0.0250  & 10    & 18    & 0.0170  & 10    & 27    & 0.0264  & 9     & 17    & 0.0161  & 13    & 30    & 0.0285  & 12    & 20    & 0.0197  \\
            BDQRTIC & 2000  & 3832  & 5964  & 1.3863  & 2469  & 4801  & 1.0979  & 3366  & 5242  & 1.1989  & 2188  & 4213  & 0.9635  & 1007  & 1584  & 0.3808  & 884   & 1691  & 0.4010  \\
            BOX   & 10000 & 3015  & 4689  & 6.1013  & 2009  & 3891  & 4.9861  & 2770  & 4305  & 5.5816  & 1793  & 3462  & 4.4329  & 698   & 1085  & 1.4385  & 668   & 1278  & 1.6854  \\
            BROWNAL & 400   & 8     & 27    & 0.0200  & 10    & 17    & 0.0129  & 8     & 27    & 0.0198  & 10    & 17    & 0.0129  & 10    & 29    & 0.0212  & 12    & 19    & 0.0146  \\
            BROYDN7D & 5000  & 2448  & 3752  & 7.1970  & 2178  & 3961  & 7.5734  & 2152  & 3272  & 6.2765  & 2195  & 3949  & 7.5585  & 2252  & 3455  & 6.6699  & 2007  & 3656  & 7.0330  \\
            BRYBND & 10000 & 36    & 48    & 0.0695  & 39    & 44    & 0.0641  & 41    & 60    & 0.0862  & 47    & 56    & 0.0808  & 41    & 60    & 0.0877  & 37    & 42    & 0.0625  \\
            CHAINWOO & 4000  & 2418  & 3739  & 1.6920  & 1781  & 3402  & 1.5169  & 2543  & 3923  & 1.7780  & 1901  & 3602  & 1.6060  &   -    &   -    &   -    & 714   & 1372  & 0.6252  \\
            CHNROSNB & 50    & 4153  & 6365  & 0.0286  & 3289  & 6174  & 0.0269  & 4154  & 6387  & 0.0282  & 3199  & 5986  & 0.0272  & 2339  & 3626  & 0.0169  & 1592  & 3015  & 0.0134  \\
            COSINE & 1000  & 8     & 10    & 0.0013  & 9     & 11    & 0.0014  & 7     & 9     & 0.0011  & 8     & 10    & 0.0012  & 8     & 10    & 0.0018  & 10    & 12    & 0.0016  \\
            CRAGGLVY & 10000 & 131   & 230   & 0.4323  & 153   & 296   & 0.5551  & 593   & 935   & 1.7850  & 127   & 244   & 0.4611  & 151   & 261   & 0.4977  & 117   & 227   & 0.4377  \\
            CURLY10 & 5000  & 1201  & 1858  & 0.8318  & 945   & 1770  & 0.7834  & 1208  & 1860  & 0.8360  & 857   & 1622  & 0.7156  & 711   & 1096  & 0.5082  & 515   & 969   & 0.4398  \\
            DIXMAANA & 9000  & 7     & 11    & 0.0111  & 10    & 11    & 0.0109  & 7     & 12    & 0.0112  & 9     & 10    & 0.0096  & 8     & 12    & 0.0119  & 10    & 11    & 0.0114  \\
            DIXMAANB & 9000  & 7     & 12    & 0.0121  & 10    & 11    & 0.0109  & 8     & 14    & 0.0130  & 9     & 10    & 0.0098  & 8     & 13    & 0.0127  & 9     & 10    & 0.0104  \\
            DIXMAANC & 9000  & 7     & 13    & 0.0126  & 12    & 13    & 0.0128  & 9     & 16    & 0.0150  & 11    & 12    & 0.0117  & 9     & 15    & 0.0146  & 12    & 13    & 0.0134  \\
            DIXMAAND & 9000  & 8     & 15    & 0.0144  & 12    & 13    & 0.0128  & 11    & 19    & 0.0180  & 11    & 12    & 0.0120  & 10    & 17    & 0.0164  & 13    & 14    & 0.0144  \\
            DIXMAANE & 9000  & 4155  & 6379  & 6.3765  & 3932  & 7401  & 6.9272  & 3855  & 5922  & 6.1231  & 3757  & 7040  & 6.5577  & 2448  & 3778  & 3.6779  & 1858  & 3517  & 3.3661  \\
            DIXMAANF & 9000  & 4567  & 6988  & 7.1188  & 3718  & 6922  & 6.5570  & 4322  & 6629  & 6.5669  & 3434  & 6486  & 6.0325  & 2718  & 4203  & 4.0829  & 1387  & 2618  & 2.5130  \\
            DIXMAANG & 9000  & 4504  & 6907  & 7.1448  & 3770  & 7073  & 6.5246  & 3715  & 5689  & 5.8587  & 3801  & 7159  & 6.8260  & 2527  & 3885  & 3.8372  & 1589  & 3019  & 2.8511  \\
            DIXMAANH & 9000  & 4500  & 6904  & 7.6149  & 3414  & 6453  & 6.0810  & 3891  & 5982  & 6.5304  & 3560  & 6701  & 6.4195  & 1993  & 3094  & 3.0510  & 1564  & 2989  & 2.8864  \\
            DIXMAANJ & 9000  & 1925  & 2966  & 3.0635  & 1343  & 2548  & 2.3553  & 1685  & 2604  & 2.5713  & 1461  & 2752  & 2.5503  & 1244  & 1928  & 1.8484  & 780   & 1470  & 1.3879  \\
            DIXMAANL & 9000  & 1053  & 1633  & 1.5780  & 817   & 1560  & 1.4602  & 1100  & 1726  & 1.6520  & 940   & 1771  & 1.6585  & 757   & 1188  & 1.1570  & 482   & 908   & 0.8718  \\
            DQDRTIC & 10000 & 27    & 36    & 0.0324  & 36    & 37    & 0.0339  & 27    & 36    & 0.0319  & 36    & 37    & 0.0338  & 24    & 33    & 0.0304  & 32    & 33    & 0.0317  \\
            DQRTIC & 2000  & 55    & 90    & 0.0078  & 51    & 65    & 0.0059  & 45    & 79    & 0.0068  & 41    & 51    & 0.0046  & 58    & 101   & 0.0092  & 58    & 85    & 0.0080  \\
            EDENSCH & 5000  & 21    & 30    & 0.0197  & 19    & 25    & 0.0156  & 18    & 31    & 0.0191  & 23    & 28    & 0.0174  & 19    & 28    & 0.0186  & 29    & 38    & 0.0243  \\
            EG2   & 1000  & 3     & 14    & 0.0017  & 4     & 9     & 0.0011  & 3     & 14    & 0.0016  & 4     & 9     & 0.0010  & 4     & 15    & 0.0017  & 5     & 10    & 0.0012  \\
            ENGVAL1 & 10000 & 14    & 23    & 0.0219  & 17    & 18    & 0.0178  & 10    & 18    & 0.0170  & 17    & 18    & 0.0179  & 16    & 25    & 0.0246  & 17    & 18    & 0.0187  \\
            EXTROSNB & 5000  & 69    & 101   & 0.0314  & 52    & 54    & 0.0176  &  -     &  -     &  -     & 91    & 172   & 0.0510  & 45    & 55    & 0.0186  & 77    & 144   & 0.0452  \\                  
            \bottomrule
        \end{tabular}%
    \end{lrbox}
    \scalebox{0.85}{\usebox{\tableboxaac}}
    \label{tab2}
\end{sidewaystable}%

\begin{sidewaystable}[ht]
    \centering
    \caption{Numerical results of VMARC and TRMSM methods (Continued)}
    \vskip2mm
    \newsavebox{\tableboxaad}
    \begin{lrbox}{\tableboxaad}
        \begin{tabular}{lrrrrrrrrrrrrrrrrrrr}
            \toprule
            \multicolumn{1}{c}{\multirow{2}[0]{*}{Problem}} & \multicolumn{1}{c}{\multirow{2}[0]{*}{n}} & \multicolumn{3}{c}{TRMSM1} & \multicolumn{3}{c}{MARC1} & \multicolumn{3}{c}{TRMSM2} & \multicolumn{3}{c}{MARC2} & \multicolumn{3}{c}{TRMSM3} & \multicolumn{3}{c}{MARC3} \\
            \cmidrule(lr){3-5}\cmidrule(lr){6-8}\cmidrule(lr){9-11}\cmidrule(lr){12-14}\cmidrule(lr){15-17}\cmidrule(lr){18-20}
            &       & \multicolumn{1}{c}{iter} & \multicolumn{1}{c}{nf} & \multicolumn{1}{c}{cpu} & \multicolumn{1}{c}{iter} & \multicolumn{1}{c}{nf} & \multicolumn{1}{c}{cpu} & \multicolumn{1}{c}{iter} & \multicolumn{1}{c}{nf} & \multicolumn{1}{c}{cpu} & \multicolumn{1}{c}{iter} & \multicolumn{1}{c}{nf} & \multicolumn{1}{c}{cpu} & \multicolumn{1}{c}{iter} & \multicolumn{1}{c}{nf} & \multicolumn{1}{c}{cpu} & \multicolumn{1}{c}{iter} & \multicolumn{1}{c}{nf} & \multicolumn{1}{c}{cpu} \\
            \midrule
            FMINSURF & 5625  &   -    &   -    &   -    & 4877  & 9110  & 6.6111  &   -    &    -   &   -    & 4299  & 8054  & 5.7941  & 3324  & 5094  & 3.7929  & 2540  & 4745  & 3.4735  \\
            FLETCBV3 & 10000 & 9     & 10    & 0.0166  & 10    & 11    & 0.0172  & 9     & 10    & 0.0163  & 11    & 12    & 0.0188  & 9     & 10    & 0.0171  & 10    & 11    & 0.0177  \\
            FREUROTH & 5000  & 1195  & 1849  & 1.1744  & 943   & 1768  & 1.1088  & 465   & 735   & 0.4651  & 289   & 554   & 0.3447  & 282   & 452   & 0.2908  & 51    & 105   & 0.0665  \\
            HILBERTA & 50    & 3257  & 4964  & 0.2494  & 2214  & 4156  & 0.2076  & 3253  & 4970  & 0.2476  & 2274  & 4211  & 0.2110  & 1247  & 1923  & 0.0968  & 496   & 941   & 0.0479  \\
            HILBERTB & 50    & 8     & 12    & 0.0007  & 8     & 9     & 0.0005  & 8     & 12    & 0.0007  & 8     & 9     & 0.0005  & 8     & 12    & 0.0007  & 8     & 9     & 0.0005  \\
            GENROSE & 500   &   -    &    -   &   -    & 4960  & 9190  & 0.3893  &    -   &    -   &   -    &  -     &   -    &    -   & 3969  & 6062  & 0.2706  & 3625  & 6721  & 0.2996  \\
            INDEF & 5000  &   -    &     -  &   -    &  -     &   -    &  -     &   -    &    -   &     -  &  -     &  -     &  -     &  -     &  -     &    -   & 3179  & 5816  & 3.4597  \\
            LIARWHD & 1000  & 2950  & 4619  & 0.3932  & 1744  & 3393  & 0.2869  & 2935  & 4581  & 0.4010  & 1644  & 3223  & 0.2738  & 905   & 1433  & 0.1316  & 624   & 1195  & 0.1044  \\
            MOREBV & 5000  & 174   & 269   & 0.0985  & 156   & 301   & 0.1079  & 197   & 309   & 0.1126  & 157   & 302   & 0.1073  & 134   & 213   & 0.0810  & 129   & 247   & 0.0912  \\
            NCB20 & 1010  & 1526  & 2326  & 3.5559  & 1626  & 3038  & 4.6309  & 1688  & 2591  & 3.9688  & 1725  & 3190  & 4.8743  & 926   & 1442  & 2.2099  & 868   & 1634  & 2.5030  \\
            NCB20B & 2000  & 90    & 162   & 0.5026  & 62    & 125   & 0.3866  & 74    & 137   & 0.4245  & 64    & 130   & 0.4018  & 58    & 102   & 0.3162  & 65    & 128   & 0.3975  \\
            NONDIA & 5000  & 25    & 64    & 0.0208  & 26    & 53    & 0.0176  & 26    & 65    & 0.0211  & 38    & 80    & 0.0264  & 20    & 55    & 0.0181  & 21    & 45    & 0.0153  \\
            PENALTY1 & 1000  & 50    & 84    & 0.0053  & 123   & 234   & 0.0140  & 96    & 153   & 0.0092  & 35    & 43    & 0.0028  & 132   & 251   & 0.0152  & 130   & 239   & 0.0144  \\
            POWER & 5000  & 2268  & 3520  & 0.6415  & 1761  & 3278  & 0.5582  & 2587  & 3974  & 0.7455  & 1990  & 3717  & 0.6410  & 1468  & 2301  & 0.4365  & 1283  & 2367  & 0.4311  \\
            POWELLSG & 1000  &    -   &  -     &  -     &  -     &   -    &   -    &    -   &  -    &   -    &  -     &     -  &  -     &  -     &   -    &  -     & 603   & 1147  & 0.0521  \\
            QUARTC & 1000  & 46    & 73    & 0.0033  & 40    & 44    & 0.0022  & 48    & 83    & 0.0036  & 36    & 46    & 0.0021  & 50    & 83    & 0.0038  & 31    & 35    & 0.0018  \\
            SCHMVETT & 5000  & 20    & 23    & 0.0313  & 37    & 38    & 0.0507  & 54    & 94    & 0.1237  & 35    & 42    & 0.0562  & 30    & 58    & 0.0766  & 39    & 40    & 0.0545  \\
            SENSORS & 1000  & 30    & 39    & 15.1534  & 22    & 26    & 8.1429  & 23    & 31    & 9.6644  & 20    & 24    & 6.1264  & 31    & 55    & 16.9737  & 24    & 28    & 7.2498  \\
            SINQUAD & 10000 & 32    & 44    & 0.0637  & 23    & 40    & 0.0566  & 19    & 33    & 0.0469  & 18    & 24    & 0.0344  & 29    & 46    & 0.0677  & 25    & 41    & 0.0604  \\
            SPARSQUR & 5000  & 29    & 44    & 0.0261  & 29    & 34    & 0.0200  & 26    & 46    & 0.0265  & 26    & 34    & 0.0196  & 39    & 70    & 0.0407  & 23    & 28    & 0.0172  \\
            SROSENBR & 5000  & 21    & 39    & 0.0100  & 18    & 28    & 0.0071  & 16    & 33    & 0.0084  & 21    & 35    & 0.0091  & 28    & 60    & 0.0152  & 33    & 53    & 0.0140  \\
            TOINTGOR & 50    & 191   & 308   & 0.0032  & 171   & 327   & 0.0033  & 201   & 324   & 0.0032  & 160   & 306   & 0.0031  & 134   & 227   & 0.0024  & 132   & 253   & 0.0026  \\
            TOINTPSP & 50    & 212   & 344   & 0.0016  & 252   & 481   & 0.0020  & 238   & 381   & 0.0015  & 231   & 441   & 0.0017  & 142   & 232   & 0.0010  & 181   & 343   & 0.0014  \\
            TOINTQOR & 50    & 44    & 59    & 0.0002  & 42    & 46    & 0.0002  & 44    & 59    & 0.0002  & 42    & 46    & 0.0002  & 37    & 42    & 0.0002  & 36    & 37    & 0.0001  \\
            VARDIM & 5000  & 369   & 639   & 0.1533  & 345   & 636   & 0.1499  & 369   & 639   & 0.1518  & 346   & 640   & 0.1500  & 369   & 639   & 0.1608  & 339   & 636   & 0.1575  \\
            VAREIGVL & 50    & 28    & 32    & 0.0003  & 27    & 28    & 0.0002  & 118   & 204   & 0.0016  & 104   & 202   & 0.0015  & 28    & 32    & 0.0003  & 27    & 28    & 0.0002  \\
            WOODS & 10000 & 84    & 156   & 0.0972  & 799   & 1534  & 0.9510  & 1590  & 2475  & 1.5742  & 1147  & 2171  & 1.3462  & 481   & 773   & 0.5111  & 72    & 107   & 0.0720  \\
            
            \bottomrule
        \end{tabular}%
    \end{lrbox}
    \scalebox{0.85}{\usebox{\tableboxaad}}
    \label{tab3}
\end{sidewaystable}%

\section*{Acknowledgement(s)}
This work was supported by the National Science Foundation of China under Grant No. 11571004.


\end{document}